\documentclass{amsproc}
\usepackage{euscript}
\usepackage{cases}
\usepackage{mathrsfs}
\usepackage{bbm}
\usepackage{amssymb}
\usepackage{amsfonts,amsmath,amsxtra,mathdots,mathabx}
\usepackage{color}
\usepackage{hyperref}
\usepackage{tikz}
\usepackage{appendix,upgreek}
\allowdisplaybreaks

\def\SG{\mathscr{G}} 

\def\SF{\mathscr{F}}

\def\SQ{\mathscr{Q}}

\DeclareFontFamily{U}{matha}{\hyphenchar\font45}
\DeclareFontShape{U}{matha}{m}{n}{
	<5> <6> <7> <8> <9> <10> gen * matha
	<10.95> matha10 <12> <14.4> <17.28> <20.74> <24.88> matha12
}{}
\DeclareSymbolFont{matha}{U}{matha}{m}{n}

\DeclareMathSymbol{\Lt}{3}{matha}{"CE}
\DeclareMathSymbol{\Gt}{3}{matha}{"CF}

\DeclareSymbolFont{mathc}{OML}{txmi}{m}{it}
\DeclareMathSymbol{\varuu}{\mathord}{mathc}{117}
\DeclareMathSymbol{\varvv}{\mathord}{mathc}{118}
\DeclareMathSymbol{\varww}{\mathord}{mathc}{119}

\def\SO {\text{\raisebox{- 2 \depth}{\scalebox{1.1}{$ \text{\usefont{U}{BOONDOX-calo}{m}{n}O}  $}}}}

\def\valpha{\text{\scalebox{0.84}[1.02]{$\alpha$}}}   
\def\vepsilon{\upvarepsilon}

\newcommand{\BQ}{{\mathbb {Q}}} 
 
\newcommand{\BZ}{{\mathbb {Z}}}

\newcommand{\ra}{\rightarrow} 
\def\sumx{\sideset{}{^\star}\sum}

\def\nd{\mathrm{d}}

\newcommand{\delete}[1]{}

\theoremstyle{plain}

\newtheorem{thmnum}{Theorem}
 
\newtheorem{hypnum}{Theorem}
 
\newtheorem{lemnum}{Theorem}

\newtheorem{thm}{Theorem}   \newtheorem{hyp}[hypnum]{Hypothesis}  \newtheorem{lemma}[lemnum]{Lemma} 
\newtheorem{lem}{Lemma} 
\newtheorem*{lem*}{Lemma}

\theoremstyle{remark} 
\newtheorem{remark}{Remark} 
\newtheorem{defn}{Definition}

\numberwithin{equation}{section}

\begin{document}
\title[A Criterion for Equidistribution along the ${\Omega}$ Function]{A Criterion for Equidistribution along the $\boldsymbol{\Omega}$ Function over Polynomial Sequences with Applications}    


	\author[Z. Qi  and C. Zheng]{Zhi Qi and Cheng Zheng}
	\address{School of Mathematical Sciences\\ Zhejiang University\\Hangzhou, 310027\\China}
	\email{zhi.qi@zju.edu.cn}
	\address{School of Mathematical Sciences\\ Shanghai Jiao Tong University\\Shanghai, 200240\\China}
	\email{zheng.c@sjtu.edu.cn}
	
	\thanks{The first author was supported by National Key R\&D Program of China No. 2022YFA1005300.}

	\subjclass[2020]{37A44}

	\maketitle
\begin{abstract}
 Let $P (Y_1, ..., Y_d)$ be a certain fixed homogeneous polynomial of integral coefficients.  In this paper, we establish a quantitative  equidistribution criterion for the ergodic averages along $\Omega (P (n_1, ..., n_d))$.  
Consequently, by an estimate of Lachant, we prove the following variant of a theorem of Bergelson and Richter: if $P$ is an irreducible binary  cubic form and  $ (X, T)$ is a uniquely ergodic system with unique
invariant measure $\mu$, then for any $x \in X$ and $f \in C(X)$,
 \begin{equation*}
 	\lim_{N \rightarrow \infty}   
 	\frac 1 {N^2}  {\mathop{\sum\sum}_{n_1, n_2 \leqslant N}}    f \big( T^{ \Omega (|P (n_1, n_2)| ) } x \big) = \int_{X} f \   \mathrm{d} \mu . 
 \end{equation*} 
Moreover, we prove in the appendix a related conjecture of C\'espedes and Donoso over number fields. 
\end{abstract}

\section{Introduction}

For $n \in \mathbb{N} $, let  $\Omega (n)$ denote the total number of prime factors of $n$ counted with
multiplicity. Set $\Omega (0) = 0$. 

Let $ P   \in \BZ [\boldsymbol{Y}] $ be  a (non-constant) homogeneous polynomial of $d$ variables $\boldsymbol{Y} = (Y_1, ..., Y_d)$ and integral coefficients. Let $ \boldsymbol{n} = (n_1, \cdots, n_d)$ denote $d$-tuples in $\mathbb{N}^d$. 
 
\begin{hyp}  \label{hyp: criterion}
	 There exist $\valpha, \beta > 0$ so that for
	 \begin{equation}
	 \|	\theta\|\geqslant \frac 1 {(\log\log N)^{\valpha}} ,  \qquad (\|\theta \| = \min_{n \in \BZ} |\theta - n|), 
	 \end{equation}
	 we have uniformly 
	 \begin{equation}\label{eq:lachand} 
	 	\sumx_{   \max \{\boldsymbol{n} \}   \leqslant N   }  
	 	\mathrm{e}\!\left(\theta \, \Omega ( |P (\boldsymbol{n})|) \right) 
	 \Lt_{\valpha, \beta, P} 
	 	\frac{N^d}{(\log\log N)^{\beta}},\qquad \text{{\rm(}$\mathrm{e} (x) = \exp (2\pi i x)${\rm),}}
	 \end{equation}
as $N \ra \infty${\rm;} here and henceforth $\star$ means the primitive condition  $\mathrm{gcd}(\boldsymbol{n})=1$. 
\end{hyp}

\begin{hyp}\label{hyp: log log bound} 
	 \delete{Define \begin{equation}
	 	C^{\star}(N) =\text{\footnotesize  \bf \#} \big\{ \boldsymbol{n} \in \mathbb{N}^{s} : \max \{   \boldsymbol{n} \} \leqslant N , \, \mathrm{gcd} (\boldsymbol{n}) = 1 \big\}.
	 \end{equation}}
 As $N \ra \infty$, we have  
 \begin{equation} 
 	\sumx_{ \max \{   \boldsymbol{n} \} \leqslant N }
 	\Omega ( |P (\boldsymbol{n})|) 
 	\Lt_{P} N^d  \log\log N.
 \end{equation}
\end{hyp}

For a continuous map $T$ on a compact metric space $X$, if it admits exactly one $T$-invariant Borel probability measure $\mu$, then we say that the system $(X,T)$ or $(X,\mu,T)$ is uniquely ergodic.

\begin{thmnum}\label{thm: under hyp}
	 Assume that Hypotheses \ref{hyp: criterion} and \ref{hyp: log log bound} hold for some $ \valpha > 1/3 $, $\beta > 1/2$, and for a homogeneous polynomial $ P (\boldsymbol{Y}) \in \BZ [\boldsymbol{Y}]$.   Let $(X, \mu, T)$ be a uniquely ergodic system.  Then 
	 \begin{equation}\label{1eq: equi-dist}
	 		\lim_{N \rightarrow \infty} \frac 1 {N^d} \sum_{\max \{   \boldsymbol{n} \} \leqslant N }    f \big( T^{ \Omega (|P (\boldsymbol{n})|)}  x \big) = \int_{X} f \   \mathrm{d} \mu, 
	 \end{equation}
 for every $x \in X$ and $f \in C (X)$. 
\end{thmnum}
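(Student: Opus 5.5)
The plan is to smooth the distribution of $\Omega(|P(\boldsymbol{n})|)$ in the ``time'' variable and then use unique ergodicity. First I reduce to primitive vectors. Write $\boldsymbol{n} = g\boldsymbol{m}$ with $\gcd(\boldsymbol{m})=1$. By homogeneity, $P(\boldsymbol{n}) = g^k P(\boldsymbol{m})$ with $k=\deg P$. Hence $\Omega(|P(\boldsymbol{n})|) = k\Omega(g) + \Omega(|P(\boldsymbol{m})|)$ whenever $P(\boldsymbol{m})\neq 0$. The zero set of $P$ in the box contributes only $O(N^{d-1})$ points, which is negligible. So the left side of \eqref{1eq: equi-dist} becomes
\[
\frac{1}{N^d}\sum_{g\leqslant N} \ \sumx_{\max\{\boldsymbol{m}\}\leqslant N/g} f\big(T^{\Omega(|P(\boldsymbol{m})|)}\, T^{k\Omega(g)}x\big).
\]
Since the weights $(N/g)^d/N^d$ are summable in $g$ (for $d\geqslant 2$; the case $d=1$ is trivial to handle separately), it suffices to prove the following. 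For each fixed $g$, the primitive average up to $M=N/g$ tends to $\frac{6}{\pi^2}\cdot$(density)$\cdot\int f\,\mathrm{d}\mu$, and this convergence is uniform in the base point $y=T^{k\Omega(g)}x$. The large $g$ form a tail of size $O(\varepsilon)$.

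For the primitive sum, let $\nu_M$ be the probability distribution on $\BZ_{\geqslant 0}$ of $\Omega(|P(\boldsymbol{m})|)$ over primitive $\boldsymbol{m}$ in the box. Put $h_y(n)=f(T^n y)$. Fix $H$ and let $K_H$ be the uniform probability measure on $\{0,\dots,H-1\}$. By unique ergodicity, $\frac1H\sum_{j<H} f(T^{n+j}y)\to\int f\,\mathrm{d}\mu$ uniformly in $n$ and $y$ as $H\to\infty$. So it suffices to show
\[
\sum_n h_y(n)\big(\nu_M(n) - (\nu_M * K_H)(n)\big)\to 0
\]
uniformly in $y$, with $H=H(M)\to\infty$ slowly. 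Bounding the left side by $\|f\|_\infty \|\nu_M-\nu_M*K_H\|_{\ell^1}$, I first truncate to $n\leqslant L\log\log M$. By Hypothesis \ref{hyp: log log bound} and Markov's inequality, the mass of $\nu_M$ beyond $L\log\log M$ is $O(1/L)$. The same holds for $\nu_M*K_H$ since $H=o(\log\log M)$.

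On the interval of length $R=L\log\log M$, I use Cauchy--Schwarz and Plancherel:
\[
\|(\nu_M-\nu_M*K_H)\mathbbm{1}_{[0,R]}\|_{\ell^1}^2 \leqslant R\int_0^1 |\widehat{\nu_M}(\theta)|^2\,|1-\widehat{K_H}(\theta)|^2\,\mathrm{d}\theta .
\]
This Fourier estimate is the main step, and the exponents are exactly where the hypotheses on $\valpha,\beta$ enter. Split the integral at $\delta=(\log\log M)^{-\valpha}$.
\begin{itemize}
\item For $\|\theta\|<\delta$, use $|\widehat{\nu_M}|\leqslant 1$ and $|1-\widehat{K_H}(\theta)|\Lt H\|\theta\|$. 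This part of the integral is $\Lt H^2\delta^3$.
\item For $\|\theta\|\geqslant\delta$, note that $\widehat{\nu_M}(\theta)$ is exactly the normalized sum in Hypothesis \ref{hyp: criterion}, divided by the number of primitive vectors, which is $\asymp M^d$. So $|\widehat{\nu_M}|\Lt(\log\log M)^{-\beta}$, and this part is $\Lt(\log\log M)^{-2\beta}$.
\end{itemize}
Multiplying by $R$, the total is
\[
\Lt L\big(H^2(\log\log M)^{1-3\valpha} + (\log\log M)^{1-2\beta}\big).
\]
This tends to $0$ precisely because $\valpha>1/3$ and $\beta>1/2$, provided $H\to\infty$ slowly enough, say $H=(\log\log M)^{\eta}$ with $2\eta<3\valpha-1$.

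Finally, I reassemble. Let $\varepsilon\to0$ in the order: $L$ large, then $M\to\infty$, then the $g$-tail small. Summing over $g$ then gives $\frac{1}{N^d}\sum_{g}(N/g)^d\frac{6}{\pi^2}\int f\,\mathrm{d}\mu\cdot(1+o(1))\to\int f\,\mathrm{d}\mu$, using $\sum_g g^{-d}\cdot 6/\pi^2\cdot\zeta(d)^{-1}$-type bookkeeping on primitive densities. The main obstacle is the $\ell^1$-smoothing step above, together with keeping all bounds uniform in the base point $y$ so that the shift by $T^{k\Omega(g)}$ is harmless.
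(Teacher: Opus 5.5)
Your argument is essentially the paper's: the same reduction $\boldsymbol{n}=g\boldsymbol{m}$ to primitive vectors, the same Parseval split at $\|\theta\|=(\log\log M)^{-\valpha}$ with Hypothesis~\ref{hyp: criterion} on the far arc, and the same Cauchy--Schwarz plus Markov truncation via Hypothesis~\ref{hyp: log log bound}. The only cosmetic difference is that you smooth with the box kernel $K_H$ and pay $H^2$ on the arc $\|\theta\|<\delta$, whereas the paper bounds $\sum_k|p_N^{\star}(k+1)-p_N^{\star}(k)|$ and only then averages over $H$ shifts via unique ergodicity; the two amount to the same thing.

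Two small corrections are needed. First, the primitive density is $1/\zeta(d)$, not $6/\pi^2$. Second, the case $d=1$ is not trivial; it must be excluded. For $d=1$ the only primitive point is $n=1$, so both hypotheses hold vacuously, yet for $P(Y)=Y^2$ and $T$ the swap of two points the conclusion fails. This is why the paper implicitly assumes $d>1$ in \eqref{2eq: C(N) Nd}.
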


It will be seen shortly that Hypothesis \ref{hyp: criterion} is the main criterion for the equidistribution of the orbit $ \big\{T^{ \Omega (|P (\boldsymbol{n})|)} x : \boldsymbol{n} \in \mathbb{N}^d \big\}$.

The next lemma on the validity of Hypothesis \ref{hyp: criterion} is a direct consequence of \cite[Th\'eor\`eme 1.4]{Lachand-Thesis} (or in the special case $P (Y_1, Y_2) = Y_1^3 + 2 Y_2^3$ \cite[Th\'eor\`eme 1.3]{Lachand}). Lachand's theorems were built on the techniques in a series of profound works by Heath-Brown and Moroz  on the infinitude of primes represented by cubic binary polynomials in \cite{Heath-Brown-Cubic-1,Heath-Brown-Cubic-2,Heath-Brown-Cubic-3}. 

\begin{lem}[Lachand]\label{lem: Hyp A}
	Hypothesis \ref{hyp: criterion} holds for any $ \valpha < 1/2 $ and $\beta < 1$ if $P (Y_1, Y_2)$ is an irreducible   binary cubic form.  
\end{lem}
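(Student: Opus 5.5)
We derive the lemma directly from Lachand's result, \cite[Th\'eor\`eme 1.4]{Lachand-Thesis}; the work is in matching his normalization to Hypothesis \ref{hyp: criterion}. Lachand's theorem is a Hal\'asz--type estimate for the exponential sum
\begin{equation*}
 S(N;\theta) = \sumx_{\max\{\boldsymbol{n}\}\leqslant N} z^{\Omega(|P(\boldsymbol{n})|)}, \qquad z=\mathrm{e}(\theta),\quad |z|=1,
\end{equation*}
over primitive pairs in a dilated box. Here $P(Y_1,Y_2)$ is an irreducible binary cubic form. His bound is essentially of the shape
\begin{equation*}
 S(N;\theta) \Lt N^2\Big( \frac{1}{(\log\log N)^{1/2}\,\|\theta\|} + \frac{1}{\log\log N}\Big)\quad\text{or}\quad N^2 (\log N)^{-c\|\theta\|^2},
\end{equation*}
depending on the formulation. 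It is obtained by Hal\'asz's method over the number field $K=\BQ(\alpha)$, where $\alpha$ is a root of $P(Y,1)$, with the Heath-Brown--Moroz machinery supplying the needed level of distribution for the cubic form.

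\emph{Step 1.} Fix $\valpha<1/2$ and $\beta<1$. Suppose $\|\theta\|\geqslant(\log\log N)^{-\valpha}$.

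\emph{Step 2.} Insert this into Lachand's bound. In the Hal\'asz form, the main term is $N^2(\log\log N)^{-1/2}\|\theta\|^{-1}\leqslant N^2(\log\log N)^{\valpha-1/2}$. The restriction $\valpha<1/2$ is exactly what makes this decay. In the $(\log N)^{-c\|\theta\|^2}$ form, the saving is
\begin{equation*}
 \exp\big(-c\|\theta\|^2\log\log N\big)\leqslant \exp\big(-c(\log\log N)^{1-2\valpha}\big),
\end{equation*}
which beats any power of $\log\log N$, again because $1-2\valpha>0$. In either case we get $S(N;\theta)\Lt N^2(\log\log N)^{-\beta}$, with the achievable $\beta$ tied to the exponent in his theorem. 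That exponent allows any $\beta<1$, since the error term is $O\big(N^2/\log\log N\big)$.

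\emph{Step 3.} Check the uniformity in $\theta$ and the setup. Lachand's estimate is uniform on $\|\theta\|\geqslant\delta$ with explicit dependence on $\delta$. His counting region (a box or a homogeneously dilated region, with primitivity imposed) must match $\max\{\boldsymbol{n}\}\leqslant N$, $\gcd(\boldsymbol{n})=1$. If he counts all pairs instead of primitive ones, we pass to primitive pairs by M\"obius inversion over $g=\gcd(\boldsymbol{n})$. Homogeneity gives $\Omega(|P(g\boldsymbol{m})|)=3\Omega(g)+\Omega(|P(\boldsymbol{m})|)$, so the sum factors twistedly. Truncating at $g\leqslant(\log\log N)^{B}$ costs only $N^2(\log\log N)^{-B}$, since the tail over larger $g$ is $\sum_{g>G} N^2/g^2\Lt N^2/G$.

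The main obstacle is Step 2: reading off from Lachand's precise statement the exact dependence on $\|\theta\|$, so that the thresholds $\valpha<1/2$ and $\beta<1$ come out as claimed. The deep input is entirely his theorem.
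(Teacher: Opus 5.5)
Your route is the paper's: the lemma is read off directly from Lachand's Th\'eor\`eme~1.4, and the paper gives no further argument. But your Step~2, which is the only place the specific exponents enter, contains an error. It is not true that ``in either case'' you get every $\beta<1$.

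With the Hal\'asz-type shape $N^2\{(\log\log N)^{-1/2}\|\theta\|^{-1}+(\log\log N)^{-1}\}$, the condition $\|\theta\|\geqslant(\log\log N)^{-\valpha}$ gives only $S(N;\theta)\Lt N^2(\log\log N)^{-(1/2-\valpha)}$, i.e.\ $\beta=1/2-\valpha$. The additive $N^2/\log\log N$ term is dominated and plays no role. A bound of that shape would not prove the lemma. Worse, it would make Theorem~\ref{thm: under hyp} unusable, since $2\beta=1-2\valpha<1$ forces $\gamma<1$. Your second shape has the opposite problem: $N^2(\log N)^{-c\|\theta\|^2}$ by itself gives every $\beta>0$ (compare Lemma~\ref{lem: A} in the appendix). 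The restriction $\beta<1$ only appears once you add an additive error term, and that term is absent from your display.

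The thresholds $\valpha<1/2$, $\beta<1$ are exactly those produced by a bound, uniform in $\theta$, of the form $N^2\{\exp(-c\|\theta\|^2\log\log N)+(\log\log N)^{-1+\varepsilon}\}$ (or with $(\log N)^{\cos(2\pi\theta)-1}$ in place of the exponential). Under $\|\theta\|\geqslant(\log\log N)^{-\valpha}$ the Gaussian factor becomes $\exp(-c(\log\log N)^{1-2\valpha})$, which is negligible against any power of $\log\log N$ precisely when $\valpha<1/2$. The additive term then gives any $\beta<1$.

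That is the shape you must extract from Lachand's theorem. You should commit to it rather than hedge between two incompatible forms, one of which fails. Your Step~3 (M\"obius inversion to primitive pairs, if Lachand's sum is not already primitive) is fine.
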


In \S \ref{sec: Hensel}, we shall show by the Hensel lemma that Hypothesis \ref{hyp: log log bound} is valid for a very large family of polynomials as in Definition \ref{defn: regular}.

\begin{defn}\label{defn: regular}
	Assume that the homogeneous polynomial $P (\boldsymbol{Y})$ has factorization of the form
	\begin{equation*}
		P (\boldsymbol{Y}) = c \prod_{j=1}^{r} P_j (\boldsymbol{Y})^{e_j},
	\end{equation*}
	where  $r, e_j \in \mathbb{N}$, $c \in \BZ \smallsetminus\{0\}$, and $ P_j  \in \BZ [\boldsymbol{Y}] $ are distinct primitive irreducible homogeneous polynomials. We say that $ P (\boldsymbol{Y})  $ is {\it factor-wise regular} if each projective hyper-surface $P_j (\boldsymbol{Y}) = 0$ is smooth over $\BQ$.  
\end{defn}

\begin{lem}\label{lem: Hyp B}
	  Hypothesis \ref{hyp: log log bound} holds if $P (\boldsymbol{Y})$ is factor-wise regular.    
\end{lem}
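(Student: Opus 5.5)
The plan is to bound $\sum^\star \Omega(|P(\boldsymbol n)|)$ by first writing $\Omega(|P(\boldsymbol n)|) \leqslant \Omega(|c|) + \sum_j e_j\,\Omega(|P_j(\boldsymbol n)|)$. This reduces matters to a single primitive irreducible form $Q = P_j$ of degree $k$ whose projective hypersurface is smooth over $\BQ$. The terms with $Q(\boldsymbol n) = 0$ contribute nothing, since $\Omega(0)=0$. For the others, $|Q(\boldsymbol n)| \Lt N^k$. We then write
\begin{equation*}
\Omega(|Q(\boldsymbol n)|) = \sum_{p^\nu \mid Q(\boldsymbol n),\, \nu\geqslant 1} 1 ,
\end{equation*}
and swap the order of summation. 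The sum becomes $\sum_{p^\nu} \#\{\boldsymbol n : \max \boldsymbol n \leqslant N,\ \gcd(\boldsymbol n)=1,\ p^\nu \mid Q(\boldsymbol n)\}$. The target bound is $N^d$ times $\sum_{p^\nu \leqslant N} \rho(p^\nu)/p^{\nu d}$, where $\rho(p^\nu)$ counts primitive residues $\boldsymbol n \bmod p^\nu$ (i.e.\ not all divisible by $p$) with $p^\nu \mid Q(\boldsymbol n)$, plus an acceptable error.

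\emph{Step 1 (Hensel).} Smoothness of $Q=0$ over $\BQ$ means $Q$ and $\partial Q/\partial Y_i$ have no common projective zero over $\overline{\BQ}$. By the Nullstellensatz there is an integer $D \neq 0$ with $D\,Y_i^{M} \in (Q, \partial_1 Q, \dots, \partial_d Q)\BZ[\boldsymbol Y]$ for all $i$. Hence for $p \nmid D$, every primitive zero of $Q$ mod $p$ is nonsingular. Hensel's lemma then lifts each primitive zero mod $p$ to exactly $p^{(\nu-1)(d-1)}$ zeros mod $p^\nu$. Together with the Lang--Weil (or trivial Schwartz--Zippel) bound $\#\{\text{primitive zeros mod } p\} \Lt p^{d-1}$, this gives $\rho(p^\nu) \Lt p^{\nu(d-1)}$ for $p \nmid D$. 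For the finitely many $p \mid D$ we use instead the cruder bound $\rho(p^\nu) \Lt_p p^{\nu(d-1)}$ for $\nu$ large. This is the main obstacle: primitivity is essential, since $Q(\boldsymbol n)$ could be divisible by huge powers of $p$ at non-primitive $\boldsymbol n$. For bad $p$ I would argue via the ideal relation: if $p^\nu \mid Q(\boldsymbol n)$ with $\boldsymbol n$ primitive, then the $p$-adic valuation of $\nabla Q(\boldsymbol n)$ is at most $v_p(D)$. A Hensel-type argument with that bounded valuation again yields $\rho(p^\nu) \Lt_{D} p^{\nu(d-1)}$.

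\emph{Step 2 (counting).} For $p^\nu \leqslant N$, the number of $\boldsymbol n$ in the box lying in a given residue class mod $p^\nu$ is $\leqslant (N/p^\nu + 1)^d \Lt N^d p^{-\nu d}$. So these moduli contribute
\begin{equation*}
\Lt N^d \sum_{p^\nu \leqslant N} p^{-\nu} \Lt N^d \log\log N
\end{equation*}
by Mertens' theorem. For $p^\nu > N$, note that $Q(\boldsymbol n)$ is nonzero and $\Lt N^k$, so it has at most $O(k)$ prime-power divisors with $p^\nu > N$ counted in $\Omega$. More precisely, the number of prime factors (with multiplicity) exceeding $N$ is $\leqslant k+O(1)$. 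Large $\nu$ with small $p$ still has $p^\nu \leqslant N^k$. The remaining pairs, with $p \leqslant N$ and $N < p^\nu \leqslant N^{k}$, are handled by dividing into $p^{\nu'}\leqslant N<p^{\nu'+1}$: divisibility by $p^\nu$ implies divisibility by $p^{\nu'}$. Such $\nu$ number $O(k\nu')$ per $p$, and their total is bounded by $N^d\sum_p \nu' p^{-\nu'(1)}\cdot O(k)$ using the bound $\rho(p^{\nu'})p^{-\nu' d}\Lt p^{-\nu'}$. For $p\leqslant \sqrt N$ this sum converges; for $p > \sqrt N$ we have $\nu'=1$ and the contribution is $N^d\sum_{\sqrt N<p\leqslant N} k/p \Lt N^d$. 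Everything sums to $\Lt_P N^d \log\log N$, proving Hypothesis \ref{hyp: log log bound}.
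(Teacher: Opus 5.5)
Your argument is correct, but it is organized differently from the paper's.

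\textbf{How the paper proceeds.} The paper never passes through local densities. It fixes $d-1$ of the coordinates as integers and applies the one-variable Hensel lemma to the coordinate $Y_j$ with $\partial P/\partial Y_j\not\equiv 0 \pmod p$. This gives $A_P^\star(N;p^\mu)\Lt N^d/p^\mu+N^{d-1}$ uniformly in $\mu$. The additive $N^{d-1}$ term, summed over $p<N$ and $\mu\Lt\log N/\log p$, totals only $O(N^d/\log N)$. So moduli $p^\mu>N$ need no separate treatment, and only primes $p\geqslant N$ are discarded by the size bound.

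\textbf{How you proceed.} You use the multivariate lifting count $\rho(p^\nu)\Lt p^{\nu(d-1)}$ together with equidistribution of the box in residue classes mod $p^\nu$. That is only efficient for $p^\nu\leqslant N$. This forces your extra step for $p\leqslant N<p^\nu$, where truncating to $p^{\nu'}\leqslant N<p^{\nu'+1}$ costs a factor $O(k\nu')$. The cost is harmless: for $p\leqslant\sqrt N$ one has $\nu'\geqslant 2$, so $\nu' p^{-\nu'}\leqslant 2p^{-2}$, and for $\sqrt N<p\leqslant N$ one has $\sum 1/p\Lt 1$.

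\textbf{A false intermediate claim.} Your first sentence there, that $Q(\boldsymbol n)$ has only $O(k)$ prime-power divisors $p^\nu>N$, is false as stated. A value equal to a power of $2$ of size $N^k$ has about $(k-1)\log N/\log 2$ such divisors. Your ``more precisely'' and the subsequent split into $p>N$ and $p\leqslant N<p^\nu$ repair this, so there is no gap.

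\textbf{Bad primes.} Both proofs use a generalized Hensel lemma with bounded derivative valuation. Your Nullstellensatz identity $D\,Y_i^M\in(Q,\partial_1Q,\dots,\partial_dQ)\BZ[\boldsymbol Y]$ makes explicit why $\min_j v_p(\partial_jQ(\boldsymbol n))\leqslant v_p(D)$ at primitive zeros with $v_p(Q(\boldsymbol n))>v_p(D)$. The paper's definition of $c_p$ leaves this implicit. From that bound, for $\nu>2v_p(D)$ each fibre in one coordinate contributes at most $k\,p^{v_p(D)}$ residues, and small $\nu$ is trivial, so your sketch closes.

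\textbf{Comparison.} The paper's route is shorter, since a single uniform bound handles all moduli. Yours exhibits the natural main term $N^d\sum_{p^\nu\leqslant N}\rho(p^\nu)p^{-\nu d}$, which is what one would need to upgrade the upper bound to an asymptotic.
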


From Theorem \ref{thm: under hyp}, Lemmas \ref{lem: Hyp A} and \ref{lem: Hyp B}, we deduce the main theorem of this paper. 

\begin{thmnum}\label{thm: main, cubic}
	Let  $P $ be an  irreducible  cubic binary form of integral coefficients.  Let $(X, \mu, T)$ be uniquely ergodic.  Then 
	\begin{equation}\label{1eq: cubic}
		\lim_{N \rightarrow \infty} \frac 1 {N^2} 
		\mathop{\sum\sum}_{n_1, n_2 \leqslant N} f \big( T^{ \Omega (|P (n_1, n_2) | ) } x \big)  = \int_{X} f \   \mathrm{d} \mu, 
	\end{equation}
	for every $x \in X$ and $f \in C (X)$. 
\end{thmnum}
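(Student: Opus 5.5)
Theorem \ref{thm: main, cubic} follows by feeding Lemmas \ref{lem: Hyp A} and \ref{lem: Hyp B} into Theorem \ref{thm: under hyp}. The work is just checking that the hypotheses of Theorem \ref{thm: under hyp} hold for $d = 2$ and $P$ an irreducible binary cubic form.

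\emph{Step 1: Hypothesis \ref{hyp: criterion}.} Lemma \ref{lem: Hyp A} gives Hypothesis \ref{hyp: criterion} for every $\valpha < 1/2$ and $\beta < 1$. Theorem \ref{thm: under hyp} needs $\valpha > 1/3$ and $\beta > 1/2$, so we take, say, $\valpha = 5/12$ and $\beta = 3/4$. Both windows $(1/3, 1/2)$ and $(1/2, 1)$ are nonempty, so this choice is possible.

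\emph{Step 2: Hypothesis \ref{hyp: log log bound}.} We check that $P$ is factor-wise regular in the sense of Definition \ref{defn: regular} and then apply Lemma \ref{lem: Hyp B}.
\begin{itemize}
\item Since $P$ is irreducible over $\BQ$, we may write $P = c P_1$, with $c$ the content of $P$ and $P_1$ primitive and irreducible. Thus $r = 1$ and $e_1 = 1$.
\item The hypersurface $P_1 = 0$ lies in $\mathbb{P}^1_{\BQ}$ and is a zero-dimensional scheme. It is smooth over $\BQ$ exactly when $P_1$ has no repeated linear factor over $\overline{\BQ}$, i.e.\ when its discriminant is nonzero.
\item An irreducible cubic over $\BQ$ is separable (characteristic $0$). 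In particular it cannot have a repeated factor over $\overline{\BQ}$, because that factor would then be defined over $\BQ$ via $\gcd(P_1, \partial P_1/\partial Y_1)$, contradicting irreducibility.
\end{itemize}
The edge case $P_1 = Y_2$ (a root at infinity) does not arise, since $P_1$ has degree $3$ and is irreducible. Hence $P$ is factor-wise regular, and Lemma \ref{lem: Hyp B} yields Hypothesis \ref{hyp: log log bound}.

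\emph{Step 3: Conclusion.} With both hypotheses in place for admissible exponents, Theorem \ref{thm: under hyp} with $d = 2$ gives \eqref{1eq: equi-dist}, which is exactly \eqref{1eq: cubic} since $\max\{n_1, n_2\} \leqslant N$ is the same as $n_1, n_2 \leqslant N$. There is no real obstacle here. The only points needing care are that the exponent ranges overlap and the smoothness check in Step 2, which is the standard equivalence between smoothness of a finite subscheme of $\mathbb{P}^1$ and squarefreeness of the form.
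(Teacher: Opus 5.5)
Your proposal is correct and follows the paper's own argument: the paper obtains Theorem \ref{thm: main, cubic} in one line from Theorem \ref{thm: under hyp} together with Lemmas \ref{lem: Hyp A} and \ref{lem: Hyp B}. Your explicit choice of exponents in the windows $(1/3,1/2)$ and $(1/2,1)$, and your check that an irreducible cubic form is squarefree over $\overline{\BQ}$ and hence factor-wise regular, fill in details the paper leaves implicit.
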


\begin{remark}
	Actually, in view of \cite[Th\'eor\`eme 1.4]{Lachand-Thesis},  Hypothesis \ref{hyp: criterion} holds for the cubic polynomial $P (a_1 + q Y_1, a_2 +  qY_2)$, with $ (a_1, a_2, q) = 1 $, so, with extra works, we may extend Theorem \ref{thm: main, cubic} in this setting. 
\end{remark}

In Appendix \ref{sec: num fields},  we shall  follow the same ideas to prove a conjecture of C\'espedes and Donoso over number fields \cite{CD-Num-Fields}, and it is turned into Theorem \ref{thm: main, quadratic} if the underlying field is imaginary quadratic.  For simplicity, let us restrict to those fields whose class number is equal to $1$. 

\begin{defn}\label{defn: norm form}
	Let $d = -1,  -2, -3, -7, -11, -19, -43, -67, -163.$ Define 
	\begin{equation*}
		\lambda = \left\{ \begin{split}
		&	\sqrt{d}, & & \text{ if } d \nequiv 1 \, (\mathrm{mod}\, 4), \\
		 & \displaystyle \frac {1 + \sqrt{d}} 2,  & & \text{ if } d \equiv 1 \, (\mathrm{mod}\, 4). 
		\end{split} \right.
	\end{equation*}
The quadratic binary form
\begin{equation}
	P (Y_1 , Y_2) = (Y_1 + \lambda Y_2) (Y_1 + \widebar{\lambda} Y_2) 
\end{equation}
is called the {\it norm form} of $\mathbb{Q} (\sqrt{d})$. 
\end{defn}

\begin{thmnum}\label{thm: main, quadratic}
	Let  $P $ be the  norm form of an imaginary quadratic field as in Definition \ref{defn: norm form}. Define 
	\begin{equation}
		C_P (N) = \text{\bf \small \#} \big\{ (n_1, n_2) \in \mathbb{N}^2 : P (n_1, n_2) \leqslant N \big\}. 
	\end{equation}  Let $(X, \mu, T)$ be uniquely ergodic.  Then 
	\begin{equation}\label{1eq: quad}
		\lim_{N \rightarrow \infty} \frac 1 {C_P (N)} 
		\mathop{\sum\sum}_{P (n_1, n_2) \leqslant N} f \big( T^{ \Omega (P (n_1, n_2)  ) } x \big)  = \int_{X} f \   \mathrm{d} \mu,  
	\end{equation}
	for every $x \in X$ and $f \in C (X)$. 
\end{thmnum}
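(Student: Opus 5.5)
The plan follows the same route as Theorem \ref{thm: under hyp}: establish the two hypotheses in the setting of the norm form, and then run the same ergodic-theoretic argument.

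First, we translate everything into the Gaussian-type ring $\mathcal{O} = \BZ[\lambda]$, which is a principal ideal domain since the class number is $1$. Each pair $(n_1,n_2)\in\mathbb{N}^2$ gives an element $\alpha = n_1 + \lambda n_2 \in \mathcal{O}$ with $P(n_1,n_2) = N(\alpha)$. Here $\Omega(N(\alpha))$ counts the prime factors of $\alpha$ weighted by residue degree. Split primes and ramified primes contribute $1$ each. An inert rational prime $p$ contributes $2$ per occurrence.

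Second, we establish the analogue of Hypothesis \ref{hyp: log log bound}. We need $\sum_{N(\alpha)\le N}\Omega(N(\alpha)) \Lt N\log\log N$ over the sector, and this is elementary. We write $\Omega(N(\alpha))=\sum_{p^k\mid N(\alpha)}1$ and count lattice points $\alpha$ in the sector with $p^k\mid N(\alpha)$. This count is $\Lt N/p^k + O(\sqrt{N})$, either by Hensel's lemma as in Lemma \ref{lem: Hyp B}, or directly via the ideal divisibility in $\mathcal{O}$. Summing over $p^k\le N$ then gives the bound. The contribution of large $p$ is handled by noting that $\alpha$ has only boundedly many prime factors of norm exceeding $N^{1/2}$.

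Third, and this is the main step, we establish the analogue of Hypothesis \ref{hyp: criterion}. Since the imaginary quadratic field has unit group of finite order, the sector $n_1,n_2\ge 0$ is a fixed cone, and $\mathrm{e}(\theta\Omega(N(\alpha)))$ is a completely multiplicative function on ideals. We will detect the cone by Hecke Grössencharacters $\xi^m(\alpha)=(\alpha/|\alpha|)^{m}$, approximating the indicator of the sector by a Fejér-type trigonometric polynomial in $\arg\alpha$. We then apply the Selberg--Delange method over $K$ to the Dirichlet series $\sum_{\mathfrak a} \mathrm{e}(\theta\Omega(N\mathfrak a))\xi^m(\mathfrak a) N\mathfrak a^{-s}$. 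This series equals $L(s,\xi^m)^{z}$ times a holomorphic factor, with $z=\mathrm{e}(\theta)$; the inert primes contribute $z^2$ and are harmless. The result is a bound $\Lt N(\log N)^{\mathrm{Re}\, z -1}$, uniform in $\theta$ and polynomially in $m$. For $\|\theta\|\ge(\log\log N)^{-\valpha}$ we have $\mathrm{Re}\,z-1\le -c\|\theta\|^2$, which is a saving far stronger than needed.

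Finally, we conclude as in Theorem \ref{thm: under hyp}. It suffices to check the conclusion for $f$ in the span of eigenfunction-type tests. For a character sum this is exactly the exponential-sum bound above, together with the trivial case $\|\theta\|$ small. For general $f$ we use the standard argument: the averages of $\Omega$ concentrate at scale $\log\log N$ with variance $\asymp\log\log N$ (by the second step and its second-moment version). This lets us replace $T^{\Omega}$ by averages of $T^{\Omega+h}$ over short ranges $|h|\le H$ with $H = o(\sqrt{\log\log N})$, and we apply unique ergodicity to these short averages. This is where $\valpha>1/3$ and $\beta>1/2$ enter, and the bounds from the third step satisfy them with room to spare. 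We expect the main obstacle to be the uniformity of the Selberg--Delange estimate in the Grössencharacter twist $m$ together with the sector cutoff error. We will try first a Fejér kernel with $M=(\log N)^{1/10}$ frequencies, which produces an error $O(1/M)$ in the cutoff and keeps $m$ small enough for zero-free-region uniformity.
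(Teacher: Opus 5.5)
Your outline is sound, but it is not the paper's route. The paper never works with the cone $\{n_1+n_2\lambda : n_1,n_2\geqslant 1\}$ directly. It proves the ideal-theoretic Theorem~\ref{thm: num field}: Lemma~\ref{lem: A} comes from the \emph{untwisted} Selberg--Delange method for $\zeta_{\mathrm K}(s)^z$, with the inert-prime discrepancy absorbed into a factor holomorphic on $\mathrm{Re}(s)>1/2$; Lemma~\ref{lem: B} comes from the prime ideal theorem; then it runs the argument of \S\ref{sec: proof, thm}. Theorem~\ref{thm: main, quadratic} is then declared a translation via $\alpha\mapsto(\alpha)$. That translation is immediate only when the cone, together with its images under units and conjugation (both preserve $\Omega(P)$), tiles the plane. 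This holds for $d=-1,-3$, where the cone is a fundamental domain for the units, and for $d=-2$, by conjugation symmetry. For $d=-7,-11,\dots,-163$, however, the cone has opening $\arctan\sqrt{|d|}<\pi/2$ while the units are only $\pm1$. Passing from ideals to $\mathbb N^2$ then needs equidistribution of $\arg\alpha$ jointly with $\mathrm e(\theta\,\Omega(N\alpha))$, which is exactly what your Gr\"ossencharacter step supplies. So the paper's route is lighter and gives the general number-field statement over ideals. Yours pays for uniformity in the twist, but it addresses the sector statement as written for all nine fields.

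There are two things to tighten.

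First, the cone step. A bound of the shape $|m|^{A}N(\log N)^{\mathrm{Re}\,z-1}$ for each twisted sum is not enough. Summed over $|m|\leqslant M=(\log N)^{1/10}$ it costs $(\log N)^{(A+1)/10}$, which swamps the saving $\exp\{-8\|\theta\|^2\log\log N\}$ when $\|\theta\|$ is near $(\log\log N)^{-\valpha}$. Use instead the following: after grouping the $w$ associates of each ideal, only $m\in w\BZ$ survive. For such $m\neq 0$ the series $L(s,\xi^m)^z$ times a holomorphic factor has no singularity at $s=1$, so the zero-free region gives $\Lt N\exp(-c\sqrt{\log N})$ uniformly for $0<|m|\leqslant(\log N)^{A}$. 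Alternatively, take $M$ a power of $\log\log N$, since the cutoff error $N/M$ only has to beat $N(\log\log N)^{-\beta}$.

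Second, your description of the final step is off. Concentration with variance $\asymp\log\log N$ is not provided by your first-moment bound, and it would not suffice anyway: it cannot exclude a parity bias, which is what the Fourier bound at $\theta=1/2$ rules out. Testing eigenfunctions also does not suffice, since a uniquely ergodic system may have no nonconstant eigenfunctions. What \S\ref{sec: proof, thm} actually uses is:
\begin{itemize}
\item Parseval for $\sum_k|p_N(k+1)-p_N(k)|^2$;
\item Markov with the first moment for $k\geqslant L$;
\item Cauchy--Schwarz for $k<L$.
\end{itemize}
A fixed shift range $H$, together with the uniform convergence of $\frac1H\sum_{h<H}f(T^{h}y)$ in $y$ given by unique ergodicity, then finishes the proof.
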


\begin{remark}
	In Appendix \ref{sec: num fields}, if we  applied the Selberg--Delange method to Hecke $L$-functions for class-group characters instead of the Dedekind $\zeta$-function,   the restriction on the class number may be easily removed.  Further, we may extend Theorem \ref{thm: main, quadratic} for any irreducible quadratic binary form  of discriminant $\mathrm{disc}(F) < 0$. 
\end{remark}

\begin{remark}
	Let $\omega (n)$ denote the number of distinct prime factors of $n$.  As a rule of thumb, the results in this paper hold if $\Omega$ were replaced by $ \omega$. For example, the counterpart of Hypothesis \ref{hyp: log log bound} is indeed true for every polynomial $P (\boldsymbol{Y})$, as there is no need for the Hensel lemma. 
\end{remark}


Theorems   \ref{thm: main, cubic} and \ref{thm: main, quadratic} are  variants of the dynamic generalization of the Prime Number Theorem due to Bergelson and Richter \cite[Theorem A]{Bergelson-Richter}: 
\begin{equation}\label{1eq: BV PNT}
	\lim_{N \rightarrow \infty} \frac 1 {N } 
\sum_{n  \leqslant N} f \big( T^{ \Omega ( n ) } x \big)  = \int_{X} f \   \mathrm{d} \mu . 
\end{equation}


The case $P (Y_1, Y_2) = Y_1^2 + Y_2^2$ has been  proven by  Donoso et al. in \cite[Theorem D]{DLMS-Gaussian}; it is in the fashion of our Theorem \ref{thm: main, cubic} rather than Theorem \ref{thm: main, quadratic}.

Finally, we raise the question of whether a similar dynamic theorem is true  for   $$P (Y_1, Y_2) =  Y_1^2 + Y_2^4, \quad  Y_1^3 + Y_2^3; $$ the former  is motivated by the work of  Friedlander and Iwaniec \cite{FI-X2+Y4} while the latter  is a simple reducible cubic form (see \cite{Helfgott-Cubic-Red}).

\section{Proof of Theorem \ref{thm: under hyp}} \label{sec: proof, thm}
 
Define 
\begin{equation}
	 C^\star ({N}) = \text{\bf \small \#} \big\{ \boldsymbol{n} \in \mathbb{N}^d : \max \{ \boldsymbol{n}\} \leqslant N, \, \mathrm{gcd} (\boldsymbol{n}) = 1 \big\}. 
\end{equation}
For  $d > 1$, it is well known that 
\begin{align}\label{2eq: C(N) Nd}
	C^\star(N) \sim \frac {N^{d}} {\zeta (d)} . 
\end{align} 
Let  $(X, \mu, T)$ be uniquely ergodic. First, we would like to prove the primitive  analogue of \eqref{1eq: equi-dist}:
\begin{equation}\label{2eq: equi-dist}
	\lim_{N \rightarrow \infty} \frac 1 {C^{\star} (N)} \, \sumx_{\max \{   \boldsymbol{n} \} \leqslant N }    f \big( T^{ \Omega (|P (\boldsymbol{n})|)}  x \big) = \int_{X} f \   \mathrm{d} \mu,  
\end{equation}
for every $x \in X$ and $f \in C (X)$. Later in \S \ref{sec: primitive}, we shall show the deduction of  \eqref{1eq: equi-dist} from \eqref{2eq: equi-dist}. 

For $k \in \mathrm{\mathbb{Z}}$, we introduce   
\begin{equation}
	p_N^{\star} (k) = \frac { \text{\bf \small \#} \big\{ \boldsymbol{n} \in \mathbb{N}^d : \max \{ \boldsymbol{n}\} \leqslant N, \, \mathrm{gcd} (\boldsymbol{n}) = 1, \, \Omega (|P(\boldsymbol{n})| ) = k  \big\} } {\text{\bf \small \#} \big\{ \boldsymbol{n} \in \mathbb{N}^d : \max \{ \boldsymbol{n}\} \leqslant N, \, \mathrm{gcd} (\boldsymbol{n}) = 1 \big\}};
\end{equation} 
Given $N$, the sequence $ \{p_N^{\star}(k)\} $ is a probability measure on $\mathbb Z$ (of course, $ p_N^{\star}(k) = 0 $ if $k < 0$), and
\begin{align*}
	\frac1{C^{\star} (N) }\, \sumx_{\max{\{\boldsymbol{n}\}\leqslant N}}f \big(T^{\Omega(|P(\boldsymbol{n})|)}x \big)=\sum_{k}p_N^{\star}(k)f(T^kx).
\end{align*}
By the unique ergodicity of the map $T:X\to X$, it suffices to prove the following theorem, since,  in the case that $ \valpha > 1/3 $ and $\beta > 1/2$, it implies
\begin{equation}
\lim_{N \ra \infty} 	\sum_{ k  } \left| p_{N}^{\star} (k+1) - p_{N}^{\star} (k) \right| = 0. 
\end{equation}

\begin{thm}\label{thm: shift}
Assume Hypotheses  \ref{hyp: criterion} and \ref{hyp: log log bound}. 	As $N \ra \infty$, 
	\begin{equation}
		\sum_{ k   } \left| p_{N}^{\star} (k+1) - p_{N}^{\star} (k) \right| \Lt_{\valpha, \beta, P}  \frac 1 {\log_2^{ (1-\gamma)/ 3 } N}  ,
	\end{equation}
if we denote $ \log_2 N = \log \log N$ and  set $ \gamma = \min \{3\valpha, 2 \beta \}$. 
\end{thm}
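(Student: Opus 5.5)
We study the Fourier transform of the probability measure $p_N^\star$ on $\BZ$,
\begin{equation*}
	\hat p_N(\theta) = \sum_k p_N^\star(k)\, \mathrm{e}(k\theta) = \frac{1}{C^\star(N)} \sumx_{\max\{\boldsymbol{n}\}\leqslant N} \mathrm{e}\big(\theta\,\Omega(|P(\boldsymbol{n})|)\big),
\end{equation*}
and bound the $\ell^1$ norm of $q(k) = p_N^\star(k+1)-p_N^\star(k)$. Its transform is $\hat q(\theta) = (\mathrm{e}(-\theta)-1)\hat p_N(\theta)$, so $|\hat q(\theta)| \Lt \|\theta\|\,|\hat p_N(\theta)|$. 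Write $L = \log_2 N$. The passage from $\ell^2$ (Parseval) to $\ell^1$ uses Cauchy--Schwarz on a window where $p_N^\star$ concentrates. By Hypothesis \ref{hyp: log log bound} and \eqref{2eq: C(N) Nd}, the mean of $\Omega$ under $p_N^\star$ is $\Lt L$. Markov's inequality then gives
\begin{equation*}
	\sum_{k > KL} p_N^\star(k) \Lt 1/K,
\end{equation*}
so the tail contributes $\Lt 1/K$ to $\sum|q(k)|$. On $0\leqslant k\leqslant KL$, Cauchy--Schwarz and Parseval give
\begin{equation*}
	\sum_{k\leqslant KL}|q(k)| \leqslant (KL)^{1/2}\Big(\int_0^1 |\hat q(\theta)|^2\,\nd\theta\Big)^{1/2}.
\end{equation*}

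Next we bound $\int_0^1|\hat q|^2$ by splitting according to the size of $\|\theta\|$.

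\emph{Case $\|\theta\|\leqslant L^{-\valpha}$.} Use the trivial bound $|\hat p_N|\leqslant 1$. This range contributes $\Lt \int_{\|\theta\|\leqslant L^{-\valpha}}\|\theta\|^2\,\nd\theta \Lt L^{-3\valpha}$.

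\emph{Case $\|\theta\|\geqslant L^{-\valpha}$.} Hypothesis \ref{hyp: criterion}, normalized via \eqref{2eq: C(N) Nd}, gives $|\hat p_N(\theta)|\Lt L^{-\beta}$. This range contributes $\Lt L^{-2\beta}$.

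Hence $\int_0^1|\hat q|^2 \Lt L^{-\gamma}$ with $\gamma=\min\{3\valpha,2\beta\}$. Combining the two pieces,
\begin{equation*}
	\sum_k|q(k)| \Lt \frac1K + K^{1/2}L^{(1-\gamma)/2}.
\end{equation*}
Choosing $K = L^{(\gamma-1)/3}$ balances the terms and yields the claimed bound $L^{-(1-\gamma)/3}$. If $\gamma\geqslant 1$ the bound is trivial or better, and one may simply take $K$ accordingly. The tail beyond $k>KL$ also includes $q(k)$ with $k+1$ in the tail, which is harmless.

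The main subtlety I expect is the normalization bookkeeping rather than any deep step. Hypothesis \ref{hyp: log log bound} is stated as an upper bound on the primitive sum, so its use in Markov's inequality needs only \eqref{2eq: C(N) Nd} and $d>1$. One should also check that the uniformity of Hypothesis \ref{hyp: criterion} in $\theta$ is what allows integrating the second case. The optimization in $K$ is routine. The resulting exponent $(1-\gamma)/3$ is negative exactly when $\valpha>1/3$ and $\beta>1/2$, matching the intended application.
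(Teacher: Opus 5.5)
Your proposal is correct and is essentially the paper's own proof. It uses the same Parseval identity for $(\mathrm{e}(-\theta)-1)\hat p_N(\theta)$ split at $\|\theta\| = (\log_2 N)^{-\valpha}$, the same Markov tail bound from Hypothesis \ref{hyp: log log bound}, and the same Cauchy--Schwarz window; your $KL = (\log_2 N)^{(2+\gamma)/3}$ is exactly the paper's cutoff. One bookkeeping point: your $1/K$ equals $(\log_2 N)^{-(\gamma-1)/3}$, not $(\log_2 N)^{-(1-\gamma)/3}$, and this is what both you and the paper actually prove (the exponent in the displayed statement carries a sign slip). So the bound is nontrivial precisely when $\gamma>1$, and your aside that it is trivial for $\gamma\geqslant 1$ has this reversed: it is trivial for $\gamma\leqslant 1$.
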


\begin{remark}
	For an irreducible cubic form $P (Y_1, Y_2)$, Lachand's result (Lemma \ref{lem: A}) yields the quantitative bound: 
	\begin{equation*}
		\sum_{ k   } \left| p_{N}^{\star} (k+1) - p_{N}^{\star} (k) \right| \Lt_{\vepsilon, P}  \frac 1 {\log_2^{ 1/6-\vepsilon } N}  . 
	\end{equation*}
\end{remark}

\begin{proof}
	 For the sequence $\{p_N^{\star}(k)\} $, define its associated Fourier series by $$\phi_N(\theta)=\sum_{k}p_N^{\star}(k)\mathrm{e}(k\cdot\theta).$$ 
	 Then the Fourier series with coefficients $\big\{p_N^{\star}(k+1)-p_N^{\star}(k)\big\} $, which we denote by $\psi_N(\theta)$, may be written as 
	 \begin{align*}
	 	\psi_N(\theta)=\sum_{k}(p_N^{\star}(k+1)-p_N^{\star}(k))\mathrm{e}(k\cdot\theta)=(\mathrm{e}(-\theta)-1)\phi_N(\theta).
	 \end{align*}
	 By the Parseval formula, we have
	 \begin{align*}
	 	\sum_{k}|p_N^{\star}(k+1)-p_N^{\star}(k)|^2=\int_{-\frac12}^{\frac12}|\psi_N(\theta)|^2\nd \theta=\int_{-\frac12}^{\frac12}|e(-\theta)-1|^2|\phi_N(\theta)|^2\nd\theta.
	 \end{align*}
Now we truncate the integral at $ |\theta | = 1 / \log_2^{\valpha} N $. 
Trivially,  $ |\phi_N(\theta) | \leqslant 1 $ and $ |\mathrm{e}(-\theta)-1| \Lt \theta^2 $, so the contribution of the arc $|\theta | \leqslant 1 / \log_2^{\valpha} N$ is bounded by $ O \big(1 / \log_2^{3\valpha} N \big)$. On Hypothesis  \ref{hyp: criterion}, for $ 1 / \log_2^{ \valpha} N \leqslant |\theta| \leqslant 1/2 $, we have 
\begin{align*}
		|\phi_N(\theta)|\Lt_{\valpha, \beta, P}\frac1{\log_2^\beta N}, 
\end{align*}
and hence the integral on this arc is $ O \big( 1 / \log _2^{ 2\beta } N \big) $. We conclude with the following $l^2$-estimate for the sequence $\big\{p_N^{\star}(k+1)-p_N^{\star}(k) \big\} $, 
\begin{equation}\label{2eq: l2-estimate}
	 \sum_{k}|p_N^{\star}(k+1)-p_N^{\star}(k)|^2\Lt_{\valpha, \beta, P} \frac1{\log_2^{3\valpha} N} + \frac1{\log_2^{2\beta}   N} \Lt \frac 1 {\log_2^{\gamma} N},  
\end{equation}
where $\gamma = \min \{ 3 \valpha, 2 \beta \}$. 

Next, Hypothesis~\ref{hyp: log log bound} implies that
\begin{align*}
	\sum_{k}k\cdot p_N^{\star}(k)\Lt_{P}\log_2 N
\end{align*}
and hence for any $L>0$, we have
\begin{align}\label{2eq: sum of pk, k > L}
	\sum_{k\geqslant L} p_N^{\star}(k)\Lt_{P}\frac {\log_2 N} {L}.
\end{align}
Therefore, with the aid of the Cauchy and the triangle inequalities, we deduce from \eqref{2eq: l2-estimate} and \eqref{2eq: sum of pk, k > L} that 
\begin{align*}
	\sum_{k}|p_N^{\star}(k)-p_N^{\star}(k+1)|&=\sum_{k<L}|p_N^{\star}(k)-p_N^{\star}(k+1)|+\sum_{k\geq L}|p_N^{\star}(k)-p_N^{\star}(k+1)|\\
	&\Lt_{\valpha,\beta,P} \frac {L^{1/2} } { \log_2^{\gamma/2} N }  + \frac {\log_2 N} L.
\end{align*}
Now the proof is completed if we choose $L = \log_2^{(2+\gamma)/3} N$. 
\end{proof}

\subsection{Removal of the Coprimality Condition} \label{sec: primitive}
It is left to deduce \eqref{1eq: equi-dist} from \eqref{2eq: equi-dist}. To this end, we have
\begin{align}\label{2eq: split the sum}
	 \sum_{\max\{\boldsymbol{n}\}\leqslant N}f\big(T^{\Omega(|P(\boldsymbol{n})|)}x\big)= \sum_{m} \, \sumx_{\max\{\boldsymbol{n}\}\leqslant N/m}f\big(T^{\Omega(|P(\boldsymbol{n})|)} \big(T^{ \delta \, \Omega(m)}x \big) \big), 
\end{align}
where $\delta =\deg(P)$, and 
\begin{align}\label{2eq: split Nd}
	N^d = \sum_{m  } C^{\star} (N / m). 
\end{align}

Let $\vepsilon>0$. Fix $M > 0$ such that $$\sum_{m > M} \frac 1   {m^d} < \vepsilon. $$ By \eqref{2eq: equi-dist}, there exists $L>0$ such that for any $N>  L$, 
\begin{align*}
	\bigg|\frac1{C^{\star} ({N/m})} \, \sumx_{\max\{\boldsymbol{n}\}\leq N/m}f \big(T^{\Omega(|P(\boldsymbol{n})|)} \big(T^{\delta\, \Omega(m)}x \big) \big)-\int_X f \, \nd\mu \bigg|<\vepsilon , 
\end{align*}
for every  $m \leqslant  M $. By the triangle inequality, it follows from \eqref{2eq: split the sum} and \eqref{2eq: split Nd}  that 
\begin{align*}
	\bigg| \frac1{N^d}\sum_{\max\{\boldsymbol{n}\}\leq N}f(T^{\Omega(|P(\boldsymbol{n})|)}x)-\int_Xf \, \nd \mu   \bigg|
\end{align*}
is bounded by the sum of 
\begin{align*}
	\sum_{m \leqslant M}  \frac {C^{\star} (N/m)} {N^d} \bigg|  \frac1{C^{\star} ({N/m})} \, \sumx_{\max\{\boldsymbol{n}\}\leq N/m}f \big(T^{\Omega(|P(\boldsymbol{n})|)} \big(T^{\delta\, \Omega(m)}x \big) \big)-\int_X f \, \nd\mu   \bigg|, 
\end{align*}
and 
\begin{align*}
	\sum_{m > M}  \frac {C^{\star} (N/m)} {N^d} \bigg\{  \frac1{C^{\star} ({N/m})}   \,  \sumx_{\max\{\boldsymbol{n}\}\leq N/m} \big| f \big(T^{\Omega(|P(\boldsymbol{n})|)} \big(T^{\delta\, \Omega(m)}x \big) \big) \big|  +  \bigg|\int_X f \, \nd\mu   \bigg|  \bigg\}.
\end{align*}
For any $N > L$, it is clear that we may bound the former by $ \vepsilon$ and the latter by $O_{d, f} (\vepsilon)$ due to \eqref{2eq: C(N) Nd}.



\section{Proof of Lemma \ref{lem: Hyp B}} \label{sec: Hensel}

Since $\Omega (n)$ is additive, we may assume that $P (\boldsymbol{Y})$ itself is irreducible and the  the projective hyper-surface $P(\boldsymbol{Y})=0$ is smooth over $\mathbb Q$.  For $ P (\boldsymbol{n}) \neq 0 $, we have
\begin{equation*}
\Omega ( |P (\boldsymbol{n})| )	= \sum_{ p } v_{p} (P (\boldsymbol{n})) ,
\end{equation*}
where $v_p$ is the $p$-adic valuation. 
For $\max \{\boldsymbol{n}\} \leqslant N $, we have
\begin{equation*}
	\sum_{p \geqslant N}  v_{p} (P (\boldsymbol{n})) \leqslant    \frac {\log | P (\boldsymbol{n})| } {\log N} \Lt_{P} 1  ,
\end{equation*}
For $N \in \mathbb{N}$ and $\mu \in \mathbb{N}$, define 
\begin{equation}\label{3eq: A(N;pmu)}
	A_{P}^{\star} (N; p^{\mu}) = \text{\bf \small \#} \big\{ \boldsymbol{n} \in \mathbb{N}^d :  \max \{\boldsymbol{n}\} \leqslant N, \, \mathrm{gcd} (\boldsymbol{n}) = 1, \, P (\boldsymbol{n}) \equiv 0 \, (\mathrm{mod}\, p^{\mu})   \big\}. 
\end{equation}  
Then 
\begin{equation} \label{3eq: sum}
	\sumx_{\max \{\boldsymbol{n}\} \leqslant N } \Omega ( |P (\boldsymbol{n})| )	 = \sum_{p < N}   \sum_{\mu \Lt \frac {\log N}  {\log p} }  A_{P} (N; p^{\mu})  + O_P (N^d)  . 
\end{equation}

Since $P(\boldsymbol{Y})=0$ is smooth over $\mathbb Q$, for almost all primes $p$,   $ P (\boldsymbol{Y}) = 0 $ is smooth over $\mathbb{F}_p$. 

\begin{lem}\label{lem:hensel}
	Let $p$ be a prime such that $ P (\boldsymbol{Y}) = 0 $ is smooth over $\mathbb{F}_p$, then  
	\begin{equation}\label{3eq: A(N;p)}
		A_{P}^{\star} (N; p^{\mu})  \Lt_{P}  {N^d}/ {p^{\mu}}+N^{d-1} . 
	\end{equation} 
\end{lem}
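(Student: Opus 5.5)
We plan to reduce the count to counting lattice points in residue classes modulo $p^{\mu}$. Let
\begin{equation*}
S(p^{\mu}) = \big\{ \boldsymbol{a} \in (\BZ/p^{\mu}\BZ)^d : P(\boldsymbol{a}) \equiv 0 \ (\mathrm{mod}\ p^{\mu}),\ p \nmid \boldsymbol{a} \big\}.
\end{equation*}
Here $p \nmid \boldsymbol{a}$ means that not all coordinates of $\boldsymbol{a}$ are divisible by $p$. Since $\gcd(\boldsymbol{n})=1$ forces $\boldsymbol{n} \not\equiv \boldsymbol{0} \ (\mathrm{mod}\ p)$, every $\boldsymbol{n}$ counted in $A_P^\star(N;p^\mu)$ reduces to an element of $S(p^{\mu})$. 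We drop the coprimality condition beyond this and get
\begin{equation*}
A_P^\star(N;p^\mu) \leqslant \sum_{\boldsymbol{a} \in S(p^\mu)} \text{\bf \small \#}\big\{ \boldsymbol{n} \in [1,N]^d : \boldsymbol{n} \equiv \boldsymbol{a} \ (\mathrm{mod}\ p^{\mu}) \big\}.
\end{equation*}

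The first step is the Hensel count $|S(p^\mu)| \Lt_P p^{\mu(d-1)}$. Take $\boldsymbol{a} \in S(p)$. It is a nonzero solution of $P=0$ over $\mathbb{F}_p$, so by smoothness of the projective hypersurface over $\mathbb{F}_p$ some partial derivative $\partial P/\partial Y_i(\boldsymbol{a})$ is nonzero mod $p$. Hensel's lemma in the multivariable form then says each solution mod $p^{\nu}$ with $p\nmid \boldsymbol{a}$ lifts to exactly $p^{d-1}$ solutions mod $p^{\nu+1}$: the other $d-1$ coordinates are free and the $i$-th is determined. Hence $|S(p^\mu)| = p^{(\mu-1)(d-1)}|S(p)|$. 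For the base case we need $|S(p)| \Lt_P p^{d-1}$, i.e.\ that the affine cone $P=0$ in $\mathbb{F}_p^d$ has $O(p^{d-1})$ points. This follows from the elementary Schwartz--Zippel bound $\deg P \cdot p^{d-1}$, since $P$ is nonzero mod $p$ for all but finitely many $p$ (and those are excluded by the smoothness hypothesis anyway).

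The second step counts the points of $[1,N]^d$ in a fixed class $\boldsymbol{a}$. This count is $\prod_j (N/p^\mu + O(1))$. Summing it naively over $S(p^\mu)$ gives $N^d p^{-\mu} + O(p^{\mu(d-1)})$, and the error term is useless when $p^{\mu} > N$. This is the main obstacle, and we expect to handle it by exploiting the Hensel structure directly instead of summing class by class. Partition $S(p^\mu)$ according to the index $i$ with $\partial_i P(\boldsymbol{a}) \not\equiv 0 \ (\mathrm{mod}\ p)$. For such classes, fix the coordinates $n_j$ with $j\ne i$ freely in $[1,N]^{d-1}$. The lifted solution then determines $n_i$ modulo $p^{\mu}$ uniquely, given its residue mod $p$: the map $n_i \mapsto P$ is locally a bijection with nonzero derivative mod $p$, so within a residue class mod $p$ there is exactly one class mod $p^\mu$. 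The number of admissible classes mod $p$ for $n_i$ is at most $\deg P$, because $P(\ldots,Y_i,\ldots)$ mod $p$ is a nonzero polynomial in $Y_i$ at points with nonvanishing derivative. So for each $(n_j)_{j\ne i}$ the number of $n_i \leqslant N$ is $\leqslant \deg P\,(N/p^\mu + 1)$. Summing over the $N^{d-1}$ choices and over the $d$ indices gives
\begin{equation*}
A_P^\star(N;p^\mu) \Lt_P N^{d-1}\big(N/p^{\mu} + 1\big) = N^d/p^\mu + N^{d-1},
\end{equation*}
which is \eqref{3eq: A(N;p)}.

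One point to verify in this step is the case where $P(\ldots,Y_i,\ldots)$ becomes identically zero mod $p$ for a particular choice of the other coordinates. That cannot happen at a point where $\partial_i P \not\equiv 0$, so the bound on admissible residues stands. Thus the whole argument is elementary, and the only input from smoothness is the nonvanishing of the gradient mod $p$ at nonzero solutions.
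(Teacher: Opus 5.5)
Your proposal is correct and, once you drop the unnecessary first step bounding $|S(p^{\mu})|$ (which you rightly abandon because it fails when $p^{\mu}>N$), it is essentially the paper's proof. The paper likewise assigns each point to an index $i$ with $\partial_i P \not\equiv 0 \pmod p$, fixes the other $d-1$ coordinates, uses that $n_i$ must lie in one of at most $\deg P$ simple-root classes mod $p$, each lifting by one-variable Hensel to a unique class mod $p^{\mu}$, and counts at most $N/p^{\mu}+1$ values of $n_i$ per class.
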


\begin{proof}
	Let us assign each root \(\overline{\boldsymbol{y}}\in\mathbf{P}_{d-1} (\mathbb{F}_p) \) of the equation $ P (\boldsymbol{Y}) = 0 $ to an index \(j\) for
	which \(\partial P /\partial Y_j(\overline{\boldsymbol{y}})\ne0\).  Fix the other
	\(d-1\) integer coordinates (with at most $N^{d-1}$ many choices).  The resulting one-variable polynomial in \(Y_j\)
	has at most $\deg (P)$ many simple roots modulo \(p\).  For each such root, the Hensel  lemma
	gives a unique root class modulo \(p^{\mu}\). On the interval \([1,N]\), every
	residue class modulo \(p^{\mu}\) occurs at most \(N/p^{\mu}+1\) many times.  Thus the number
	of points as in \eqref{3eq: A(N;pmu)} assigned to \(j\) is   $ O_{P} (N^{d-1} ( {N} / {p^{\mu}}+1 )), $ and, by summing over $j = 1, ..., d$, we arrive at \eqref{3eq: A(N;p)}. 
\end{proof}

By Lemma \ref{lem:hensel}, we infer that the contribution from the good primes $p  $ to the sum in \eqref{3eq: sum} is bounded by 
\begin{align*}
	  N^{d} \sum_{p < N}    \bigg(\frac 1 {p-1} + \frac {\log N} {N \log p} \bigg) \Lt N^d \log \log N . 
\end{align*}

For those finitely many bad primes $p < N$  (i.e. $P(\boldsymbol{Y})=0$ is not smooth over $\mathbb F_p$), if we use the trivial bound $A_{P}^{\star} (N; p^{\mu}) < N^{d}$, their contribution would be $ O (N^d \log N) $. 
By the next lemma, however, we may readily improve this into $ O (N^d) $. 

\begin{lem}
	 There is a constant $c_p = c_p (P)$ for every prime $p$ such that  
	 \begin{equation}
	 	A_{P}^{\star} (N; p^{\mu})  \Lt_{P} p^{2c_p+1} N^{d-1} (N / p^{\mu} + 1),  
	 \end{equation}
 for any $\mu > 2  c_p$. 
\end{lem}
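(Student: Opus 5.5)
We prove the lemma with a quantitative Hensel lemma, in the spirit of the proof of Lemma \ref{lem:hensel}. The idea is to fix a bounded ramification order $c_p$ for the gradient of $P$ at primitive points, and then lift roots once the congruence level exceeds $2c_p$.

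\textbf{Step 1 (choice of $c_p$).} Since $P$ is irreducible and $P=0$ is smooth over $\mathbb{Q}$, the forms $P, \partial P/\partial Y_1, \dots, \partial P/\partial Y_d$ have no common nontrivial zero over $\overline{\mathbb{Q}}$. By the projective Nullstellensatz (or resultants), some power of each $Y_i$ lies in the ideal they generate over $\mathbb{Z}$, up to a nonzero integer $D=D(P)$:
\[
D\, Y_i^{R}=A_{i0}P+\sum_j A_{ij}\,\partial P/\partial Y_j ,\qquad A_{ij}\in\mathbb{Z}[\boldsymbol{Y}].
\]
Let $c_p=v_p(D)$. Now take a primitive $\boldsymbol{n}$ with $p^{\mu}\mid P(\boldsymbol{n})$ and $\mu>c_p$. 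Some $n_i$ is a $p$-adic unit, so $\min_j v_p(\partial P/\partial Y_j(\boldsymbol{n}))\leqslant c_p$.

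\textbf{Step 2 (lifting).} Assign $\boldsymbol{n}$ to an index $j$ and a value $t\leqslant c_p$ with $v_p(\partial_j P(\boldsymbol{n}))=t$. Fix the other $d-1$ coordinates; there are at most $N^{d-1}$ choices. Let $f(Y)=P(\dots,Y,\dots)$, so that $v_p(f'(n_j))=t$ and $f(n_j)\equiv0\ (\mathrm{mod}\ p^{\mu})$ with $\mu>2t$.

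By the strong Hensel lemma, if $y\equiv y'\ (\mathrm{mod}\ p^{t+1})$ then $v_p(f'(y'))=t$ as well. Moreover, every root of $f$ modulo $p^{\mu}$ with $v_p(f')=t$, when reduced modulo $p^{\mu-t}$, comes from a unique $p$-adic root. It follows that the set of such $n_j$ modulo $p^{\mu}$ is covered by at most $p^{t}$ residue classes per $p$-adic root. To see this, write the solutions as $y_0+p^{\mu-t}z$, where $y_0$ is determined modulo $p^{\mu-t}$ and $z$ ranges modulo $p^{t}$.

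\textbf{Step 3 (counting).} The number of residue classes modulo $p^{t+1}$ that we start from is at most $p^{t+1}$. Each such class contributes at most $\deg P$ roots, and each root gives at most $p^{t}$ classes modulo $p^{\mu}$. This yields $O_P(p^{2t+1})\leqslant O_P(p^{2c_p+1})$ admissible classes modulo $p^{\mu}$. Each class meets $[1,N]$ in at most $N/p^{\mu}+1$ integers. Summing over $j$ and over $t\leqslant c_p$ gives the claimed bound; the extra factor $c_p+1$ is absorbed because $c_p$ depends only on $P$.

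\textbf{Main obstacle.} The delicate point is Step 2. We must check that the number of root classes modulo $p^{\mu}$ lying above each starting class is bounded uniformly in $\mu$, which is exactly where $\mu>2c_p$ is needed. We must also exclude the case where $f$ vanishes identically modulo $p$ on the fibre. That case is harmless, however, because the argument only uses $v_p(f'(n_j))=t$ and never uses the reduction of $f$ modulo $p$.
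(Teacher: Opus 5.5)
Your proposal is correct and follows essentially the paper's argument: bound the $p$-adic valuation of some partial derivative at primitive zeros by $c_p$, fix the other $d-1$ coordinates, apply the generalized Hensel lemma with $\mu > 2t$, and count residue classes modulo $p^{\mu}$. Your Step 3 count is lossy, since $f$ has at most $\deg P$ roots in $\mathbb{Z}_p$ in total and so only $O_P(p^{t})$ classes arise, but it still gives the stated bound. Your explicit choice $c_p = v_p(D)$, via $D\,Y_i^{R} \in (P, \partial_1 P, \dots, \partial_d P)\,\mathbb{Z}[\boldsymbol{Y}]$, usefully makes precise the paper's loose definition of $c_p$ as a ``smoothness level over $\mathbb{Z}/p^{v+1}\mathbb{Z}$''.
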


\begin{proof}
	  Let $c_p  $ be 
	  the smallest $v$ such that $ P (\boldsymbol{Y}) = 0 $ is smooth over $\BZ / p^{v+1} \BZ$. This lemma follows by the argument in the proof of Lemma \ref{lem:hensel}, using the generalized Hensel lemma as follows (see \cite[Theorem 7.3]{Eisenbud-CA}).  
	  \begin{lem*}[Hensel]
	  Let $f \in \BZ_p [Y]$. Let $x \in \BZ_p$ satisfy
	  \begin{align*}
	  v_p (	f (x)) > 2 v_p  ( f' (x) )  , 
	  \end{align*}
  then there is a unique root $y$ of $f (Y)$ such that
  \begin{align*}
  	v_p (x-y) >   v_p  ( f' (x) ). 
  \end{align*}
	  \end{lem*}
\end{proof}

\appendix 

\section{Proof of a Conjecture of C\'espedes and Donoso} 
\label{sec: num fields}

Recently, the number-field variant of \cite[Theorem A]{Bergelson-Richter} (see \eqref{1eq: BV PNT}) was established by C\'espedes and Donoso in  \cite{CD-Num-Fields}.  Let $\mathrm{K}$ be a number field, $ \SO_{\mathrm{K}}$ be its ring of integers, $G_{\mathrm{K}}$ denote the set of non-zero ideals in $\SO_{\mathrm{K}}$, and, for  $\mathfrak{n} \in G_{\mathrm{K}} $, let $\Omega_{\mathrm{K}} (\mathfrak{n})$ be the $\Omega$-function that counts the  number of prime ideals of $\mathfrak{n}$ with multiplicity. Let $\mathscr{N} (\mathfrak{n})$ denote the norm of $\mathfrak{n}$.  Define 
\begin{equation}\label{appeq: CK(N)}
	C_{\mathrm{K}} (N) = \text{\bf \small \#} \big\{ \mathfrak{n} \in G_{\mathrm{K}} : \mathscr{N} (\mathfrak{n})  \leqslant N \big\}. 
\end{equation} 
Note that (see \eqref{appeq: SD z=1}) 
\begin{align}\label{appeq: CK(N) - N}
	C_{\mathrm{K}} (N) \sim \rho_{\mathrm{K}} N, \qquad \rho_{\mathrm{K}} = \mathop{\mathrm{Res}}_{s=1} \zeta_{\mathrm{K}} (s),
\end{align}
where $\zeta_{\mathrm{K}} (s)$ is the Dedekind $\zeta$-function for $\mathrm{K}$. C\'espedes and Donoso proved in  \cite[Theorem 1.5]{CD-Num-Fields} that if  $(X, \mu, T)$ is uniquely ergodic, then for every $x \in X$ and $f \in C (X)$, 
\begin{equation}\label{appeq: PNT}
	\lim_{N \rightarrow \infty} \frac 1 { C_{\mathrm{K}} (N)  } 
	\sum_{\mathscr{N} (\mathfrak{n})  \leqslant N} f \big( T^{ \Omega_{\mathrm{K}} ( \mathfrak{n} ) } x \big)  = \int_{X} f \   \mathrm{d} \mu . 
\end{equation}
 It is proven via the approach suggested by Kanigowski and  Radziwi\l\l{} in \cite[Remark 1.3]{Bergelson-Richter} using the Sathe--Selberg theorem (the precise local law for the Erd\"os–Kac theorem) for $\Omega_{\mathrm{K}} ( \mathfrak{n} )$ in the work of Wu \cite{Wu-Selberg-Delange} (for the classical case of $\Omega (n)$, see  \cite{Erdos-K-P-Factors,Sathe,Selberg-Sathe}).   

Further, C\'espedes and Donoso conjecture that the limit on the left of \eqref{appeq: PNT} still exists if $ \Omega_{\mathrm{K}} ( \mathfrak{n} ) $ were replaced by $ \Omega (\mathscr{N} (\mathfrak{n})) $. The main theorem of this appendix manifests that their conjecture is true.  It should be stressed that the criterion (Lemma  \ref{lem: A}) that we use is weaker than the Sathe--Selberg theorem (but we believe that it is stronger than the Erd\"os–Kac theorem).

\begin{thmnum}
	\label{thm: num field}
	Let the notation be as above. Let  $(X, \mu, T)$ be uniquely ergodic. Then 
	\begin{equation}\label{appeq: PNT, 2}
		\lim_{N \rightarrow \infty} \frac 1 { C_{\mathrm{K}} (N)  } 
		\sum_{\mathscr{N} (\mathfrak{n})  \leqslant N} f \big( T^{ \Omega ( \mathscr{N} (\mathfrak{n}) ) } x \big)  = \int_{X} f \   \mathrm{d} \mu ,
	\end{equation}
	for every $x \in X$ and $f \in C (X)$.
\end{thmnum}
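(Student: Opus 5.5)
The plan is to copy the structure of the proof of Theorem \ref{thm: under hyp}, with $\mathbb{N}^d$ replaced by ideals of $\SO_{\mathrm{K}}$. For $k\in\BZ$ set
\[
p_N(k)=\frac{\text{\bf \small \#}\{\mathfrak{n}\in G_{\mathrm{K}}:\mathscr{N}(\mathfrak{n})\leqslant N,\ \Omega(\mathscr{N}(\mathfrak{n}))=k\}}{C_{\mathrm{K}}(N)} .
\]
The left side of \eqref{appeq: PNT, 2} equals $\sum_k p_N(k) f(T^k x)$. By unique ergodicity it therefore suffices to show $\sum_k |p_N(k+1)-p_N(k)|\to 0$. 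The argument in the proof of Theorem \ref{thm: shift} uses only two inputs, and it transfers verbatim once both are available with $\gamma=\min\{3\valpha,2\beta\}>1$:
\begin{enumerate}
\item a Fourier bound $\phi_N(\theta)=\sum_k p_N(k)\mathrm{e}(k\theta)\Lt \log_2^{-\beta}N$ for $\|\theta\|\geqslant \log_2^{-\valpha}N$;
\item a first-moment bound $\sum_k k\,p_N(k)\Lt \log_2 N$.
\end{enumerate}
Given these, Parseval, the truncation at $|\theta|=\log_2^{-\valpha}N$, Cauchy--Schwarz and the choice of $L$ go through unchanged.

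The first-moment bound is easy. Write $\mathscr{N}(\mathfrak{n})=\prod_{\mathfrak{p}}\mathscr{N}(\mathfrak{p})^{v_{\mathfrak{p}}(\mathfrak{n})}$ with $\mathscr{N}(\mathfrak{p})=p^{f_{\mathfrak{p}}}$ and $f_{\mathfrak{p}}\leqslant[\mathrm{K}:\BQ]$. Then $\Omega(\mathscr{N}(\mathfrak{n}))\leqslant[\mathrm{K}:\BQ]\,\Omega_{\mathrm{K}}(\mathfrak{n})$, and the standard average $\sum_{\mathscr{N}(\mathfrak{n})\leqslant N}\Omega_{\mathrm{K}}(\mathfrak{n})\Lt N\log_2 N$ follows by counting ideals divisible by $\mathfrak{p}^\mu$, since there are $\Lt N/\mathscr{N}(\mathfrak{p})^{\mu}$ of them.

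The main step is the Fourier bound, and for it I would use the Selberg--Delange method, as the remarks preceding the theorem indicate. Put $z=\mathrm{e}(\theta)$ and consider
\[
F(s,z)=\sum_{\mathfrak{n}} z^{\Omega(\mathscr{N}(\mathfrak{n}))}\mathscr{N}(\mathfrak{n})^{-s}=\prod_{\mathfrak{p}}\big(1-z^{f_{\mathfrak{p}}}\mathscr{N}(\mathfrak{p})^{-s}\big)^{-1}.
\]
Compare it with $\zeta_{\mathrm{K}}(s)^{z}$. Primes of degree $f_{\mathfrak{p}}\geqslant 2$ satisfy $\sum \mathscr{N}(\mathfrak{p})^{-\sigma}<\infty$ for $\sigma>1/2$, so $F(s,z)=\zeta_{\mathrm{K}}(s)^{z}G(s,z)$ with $G$ holomorphic and uniformly bounded in $\Re s>1/2+\vepsilon$ for $|z|=1$. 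The one technical point is that the unramified degree-one primes contribute exactly the factor $\zeta_{\mathrm{K}}(s)^{z}$ up to such a $G$. The Selberg--Delange theorem (Wu's version \cite{Wu-Selberg-Delange} for $\zeta_{\mathrm{K}}$, taken uniformly on $|z|=1$) then gives
\[
\sum_{\mathscr{N}(\mathfrak{n})\leqslant N} z^{\Omega(\mathscr{N}(\mathfrak{n}))}=N(\log N)^{z-1}\Big(\frac{\rho_{\mathrm{K}}^{z}\,G(1,z)}{\Gamma(z)}+O\Big(\frac{1}{\log N}\Big)\Big).
\]
Dividing by $C_{\mathrm{K}}(N)\sim\rho_{\mathrm{K}}N$ yields $|\phi_N(\theta)|\Lt (\log N)^{\cos 2\pi\theta-1}$. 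For $\|\theta\|\geqslant \log_2^{-\valpha}N$ we have $1-\cos2\pi\theta\gg \|\theta\|^2\gg \log_2^{-2\valpha}N$, so
\[
|\phi_N(\theta)|\Lt \exp\big(-c\,\log_2^{1-2\valpha}N\big).
\]
Taking $\valpha$ slightly less than $1/2$ (say $\valpha=0.45>1/3$), this decays faster than any power of $\log_2 N$. Hence any $\beta$ works, and $\gamma=3\valpha>1$.

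I expect the main obstacle to be the uniformity in $z$ on the whole circle, particularly near $z=-1$ and in handling $1/\Gamma(z)$ and $G(1,z)$. On $|z|=1$ these are bounded, since $|G|$ is controlled by $\exp\big(\sum_{f_{\mathfrak{p}}\geqslant2}\mathscr{N}(\mathfrak{p})^{-1}\big)$ and $1/\Gamma$ is entire, so an upper bound suffices. The classical zero-free region for $\zeta_{\mathrm{K}}$ supplies the error term uniformly for $|z|\leqslant 1$. With both inputs established, the Parseval argument of Theorem \ref{thm: shift} gives $\sum_k|p_N(k+1)-p_N(k)|\Lt \log_2^{-(1-\gamma)/3}N\to0$, and unique ergodicity yields \eqref{appeq: PNT, 2}. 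No coprimality removal is needed here, since we sum over all ideals directly.
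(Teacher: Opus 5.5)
Your proposal is correct and follows essentially the paper's own route: you feed the same two inputs into the Parseval/Cauchy--Schwarz argument of Theorem~\ref{thm: shift}, namely the Selberg--Delange bound $|\phi_N(\theta)|\ll(\log N)^{\cos 2\pi\theta-1}\leqslant\exp(-8\|\theta\|^2\log_2 N)$, obtained by factoring $\zeta_{\mathrm K}(s)^z$ out of the generating series with a remainder holomorphic on $\mathrm{Re}(s)>1/2$ (equivalent to the paper's $\mathscr{H}\cdot\SG$ factorization), and the first-moment bound from counting ideals divisible by $\mathfrak p^\mu$. One cosmetic slip, which the paper's statement of Theorem~\ref{thm: shift} also contains: with $L=\log_2^{(2+\gamma)/3}N$ the rate is $\log_2^{(1-\gamma)/3}N=\log_2^{-(\gamma-1)/3}N$, not $\log_2^{-(1-\gamma)/3}N$, and it is the former that tends to $0$ when $\gamma>1$.
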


Theorem \ref{thm: main, quadratic} is a translation of Theorem \ref{thm: num field} in the special case that $ \mathrm{K} = \BQ (\sqrt{d}) $ is imaginary quadratic (so that the group of units is finite) and of class number $h_{\mathrm{K}} = 1$. 

\subsection{Proof of Theorem \ref{thm: num field}} 

First, in correspondence to Hypotheses \ref{hyp: criterion} and \ref{hyp: log log bound}, we have Lemmas \ref{lem: A} and \ref{lem: B} as follows. 

\begin{lemma}\label{lem: A}
	 We have 
	 \begin{equation}\label{appeq: bound exp sum}
	 	\sum_{\mathscr{N} (\mathfrak{n})  \leqslant N} \mathrm{e} (\theta \, \Omega (\mathscr{N} (\mathfrak{n})) ) \Lt_{\mathrm{K}} N  \exp \big\{ \! - 8 \|\theta\|^2 \log \log N \big\},
	 \end{equation}
 uniformly for all real $\theta$. 
\end{lemma}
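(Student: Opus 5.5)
Since $\Omega$ is completely additive and the norm is multiplicative, the function $\mathfrak{n} \mapsto z^{\Omega(\mathscr{N}(\mathfrak{n}))}$, with $z = \mathrm{e}(\theta)$ on the unit circle, is completely multiplicative on $G_{\mathrm{K}}$. The plan is therefore to apply the Selberg--Delange method, in the form for number fields due to Wu \cite{Wu-Selberg-Delange}, to the Dirichlet series
\begin{equation*}
F(s;z) = \sum_{\mathfrak{n} \in G_{\mathrm{K}}} \frac{z^{\Omega(\mathscr{N}(\mathfrak{n}))}}{\mathscr{N}(\mathfrak{n})^s} = \prod_{\mathfrak{p}} \Big(1 - \frac{z^{f_{\mathfrak{p}}}}{\mathscr{N}(\mathfrak{p})^{s}}\Big)^{-1},
\end{equation*}
where $\mathscr{N}(\mathfrak{p}) = p^{f_{\mathfrak{p}}}$, so that $\Omega(\mathscr{N}(\mathfrak{p})) = f_{\mathfrak{p}}$ is the residue degree.

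\textbf{Step 1: factorization.} Prime ideals of degree $f_{\mathfrak{p}} \geqslant 2$ contribute an Euler product that converges absolutely and is bounded, with bounded derivatives, for $\Re s > 1/2 + \vepsilon$ and $|z| \leqslant 1$, since $\sum_{f_{\mathfrak{p}} \geqslant 2} \mathscr{N}(\mathfrak{p})^{-\sigma} \Lt \sum_p p^{-2\sigma}$. On the degree-one primes the factor is $(1 - z\,\mathscr{N}(\mathfrak{p})^{-s})^{-1}$, which we compare with $\zeta_{\mathrm{K}}(s)^{z}$. This gives
\begin{equation*}
F(s;z) = \zeta_{\mathrm{K}}(s)^{z} \, G(s;z),
\end{equation*}
where $G$ is holomorphic and uniformly bounded in $\Re s > 1/2 + \vepsilon$, $|z| \leqslant 1$. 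The point is that inside $\zeta_{\mathrm{K}}^{z}$ the higher-degree primes appear with weight $z$, whereas in $F$ they appear with weight $z^{f}$, and both of these are absolutely convergent corrections.

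\textbf{Step 2: Selberg--Delange.} With the factorization in hand, Selberg--Delange (Wu's version for $\zeta_{\mathrm{K}}$, using its zero-free region and the convexity bound) yields, uniformly for $|z| \leqslant 1$,
\begin{equation*}
\sum_{\mathscr{N}(\mathfrak{n}) \leqslant N} z^{\Omega(\mathscr{N}(\mathfrak{n}))} = \frac{G(1;z)\, \rho_{\mathrm{K}}^{z}}{\Gamma(z)}\, N (\log N)^{z-1} + O\big(N (\log N)^{\Re z - 2}\big).
\end{equation*}
Taking $z = \mathrm{e}(\theta)$, the main term has modulus
\begin{equation*}
\Lt N (\log N)^{\cos 2\pi\theta - 1}.
\end{equation*}
Here $1/\Gamma(z)$ is bounded on the unit circle, and at $z = -1$, i.e.\ $\theta = 1/2$, it vanishes, which is harmless. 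The error term is even smaller.

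\textbf{Step 3: the elementary inequality.} It remains to check
\begin{equation*}
1 - \cos 2\pi\theta = 2\sin^2(\pi\theta) \geqslant 2\big(2\|\theta\|\big)^2 = 8\|\theta\|^2,
\end{equation*}
using $|\sin \pi\theta| \geqslant 2\|\theta\|$. Hence
\begin{equation*}
(\log N)^{\cos 2\pi\theta - 1} = \exp\big\{-(1-\cos 2\pi\theta)\log\log N\big\} \leqslant \exp\big\{-8\|\theta\|^2 \log\log N\big\},
\end{equation*}
which is the bound \eqref{appeq: bound exp sum}.

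\textbf{Main obstacle.} The delicate point is the uniformity in $z$ over the whole unit circle, including $z$ near $-1$. One needs $G(s;z)$ to be bounded uniformly there, and the Selberg--Delange contour argument (Hankel contour around $s = 1$, with $\zeta_{\mathrm{K}}^{z}$ defined via the logarithm in the zero-free region) to hold uniformly for $|z| \leqslant A$. Both are standard, as in Tenenbaum's treatment transplanted to $\zeta_{\mathrm{K}}$ by Wu. I would cite that theorem rather than redo the contour integration, and verify only that $F/\zeta_{\mathrm{K}}^{z}$ satisfies its hypotheses.
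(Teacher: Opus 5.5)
Your proposal is correct and follows essentially the paper's route. The paper writes $\SF = \mathscr{H}\cdot\SG$, with $\SG$ Wu's series for $\Omega_{\mathrm{K}}$; this is your $\zeta_{\mathrm{K}}^z G$ split into two factors. It runs the Selberg--Delange method to get $W(z)N(\log N)^{z-1} + O(N(\log N)^{\mathrm{Re}(z)-2})$ uniformly for $|z|\leqslant 1$, and then specializes to $z=\mathrm{e}(\theta)$ exactly as you do. Your Jordan-inequality step supplies the justification of $1-\cos 2\pi\theta \geqslant 8\|\theta\|^2$, which the paper leaves implicit.

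One caution, which the paper itself flags: Wu's theorem is stated for $z^{F(\mathfrak{n})}$ with $F(\mathfrak{p})=1$, so you cannot cite it verbatim. You need to rerun the Selberg--Delange argument for $\zeta_{\mathrm{K}}(s)^z G(s;z)$, or invoke a general Tenenbaum-type version of it.
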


\begin{lemma}\label{lem: B}
	 We have 
	 \begin{equation}\label{appeq: bound for Omega}
	 	 \sum_{\mathscr{N} (\mathfrak{n})  \leqslant N}   \Omega (\mathscr{N} (\mathfrak{n}))  \Lt_{\mathrm{K}} N  \log \log N . 
	 \end{equation}
\end{lemma}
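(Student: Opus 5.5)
The plan is to reduce $\Omega(\mathscr{N}(\mathfrak{n}))$ to the ideal-theoretic function $\Omega_{\mathrm{K}}(\mathfrak{n})$, and then count prime-power divisors using only the crude bound $C_{\mathrm{K}}(x) \Lt_{\mathrm{K}} x$ together with Mertens' estimate over rational primes. Let $n_{\mathrm{K}} = [\mathrm{K}:\BQ]$.

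\emph{Step 1: comparison with $\Omega_{\mathrm{K}}$.} Write $\mathfrak{n} = \prod_{\mathfrak{p}} \mathfrak{p}^{k_{\mathfrak{p}}}$. The norm is completely multiplicative and $\mathscr{N}(\mathfrak{p}) = p^{f_{\mathfrak{p}}}$, where $p$ is the rational prime below $\mathfrak{p}$ and $1 \leqslant f_{\mathfrak{p}} \leqslant n_{\mathrm{K}}$. Hence
\begin{equation*}
\Omega(\mathscr{N}(\mathfrak{n})) = \sum_{\mathfrak{p}} k_{\mathfrak{p}} f_{\mathfrak{p}} \leqslant n_{\mathrm{K}}\, \Omega_{\mathrm{K}}(\mathfrak{n}).
\end{equation*}
So it suffices to prove $\sum_{\mathscr{N}(\mathfrak{n}) \leqslant N} \Omega_{\mathrm{K}}(\mathfrak{n}) \Lt_{\mathrm{K}} N \log\log N$.

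\emph{Step 2: switching the order of summation.} We use $\Omega_{\mathrm{K}}(\mathfrak{n}) = \#\{(\mathfrak{p},k) : k \geqslant 1,\ \mathfrak{p}^k \mid \mathfrak{n}\}$. Writing $\mathfrak{n} = \mathfrak{p}^k \mathfrak{m}$, this gives
\begin{equation*}
\sum_{\mathscr{N}(\mathfrak{n}) \leqslant N} \Omega_{\mathrm{K}}(\mathfrak{n}) = \sum_{\mathscr{N}(\mathfrak{p})^k \leqslant N} C_{\mathrm{K}}\big(N/\mathscr{N}(\mathfrak{p})^k\big).
\end{equation*}
By \eqref{appeq: CK(N) - N}, and since $C_{\mathrm{K}}(x)$ is bounded on bounded ranges, we have $C_{\mathrm{K}}(x) \Lt_{\mathrm{K}} x$ uniformly for $x \geqslant 1$. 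Therefore the sum is
\begin{equation*}
\Lt_{\mathrm{K}} N \sum_{\mathscr{N}(\mathfrak{p})^k \leqslant N} \frac{1}{\mathscr{N}(\mathfrak{p})^k}.
\end{equation*}

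\emph{Step 3: Mertens-type bound.} At most $n_{\mathrm{K}}$ prime ideals lie above each rational prime $p$. Each of them has norm $p^{f}$ with $f \geqslant 1$, so $\mathscr{N}(\mathfrak{p})^k = p^{fk}$ with $fk \geqslant 1$. Grouping by $p$ and by $j = fk$, each pair $(p,j)$ receives at most $n_{\mathrm{K}}$ contributions. Hence
\begin{equation*}
\sum_{\mathscr{N}(\mathfrak{p})^k \leqslant N} \frac{1}{\mathscr{N}(\mathfrak{p})^k} \leqslant n_{\mathrm{K}} \sum_{p^j \leqslant N} \frac{1}{p^j} \leqslant n_{\mathrm{K}} \bigg(\sum_{p \leqslant N} \frac 1p + \sum_{p} \frac{1}{p(p-1)}\bigg) \Lt_{\mathrm{K}} \log\log N,
\end{equation*}
by Mertens' theorem. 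Combining this with Steps 1 and 2 gives \eqref{appeq: bound for Omega}.

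There is no real obstacle here. The only point needing care is the uniform bound $C_{\mathrm{K}}(x) \Lt x$ for all $x \geqslant 1$, not just asymptotically. It follows from the asymptotic \eqref{appeq: CK(N) - N} for large $x$ and from finiteness for small $x$. Alternatively it follows from the elementary fact that the number of ideals of norm $m$ is at most $\tau(m)^{n_{\mathrm{K}}}$, which would give a weaker but still adequate bound only after more work, so the asymptotic is the cleaner route.
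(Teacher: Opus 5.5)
Your proof is correct and is essentially the paper's argument. Both use $f_{\mathfrak p}\leqslant n_{\mathrm K}$, rewrite the sum as $\sum_{\mathscr N(\mathfrak p)^k\leqslant N} C_{\mathrm K}(N/\mathscr N(\mathfrak p)^k)$, and apply $C_{\mathrm K}(x)\Lt_{\mathrm K} x$. The paper then cites Landau's prime ideal theorem, while you group prime ideals over rational primes and use Mertens, which is slightly more elementary.

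Drop your closing aside: the divisor-function bound alone only gives $C_{\mathrm K}(x)\Lt x(\log x)^{A}$ for some $A>0$ and would lose a power of $\log N$. Your main argument does not use it.
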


It is clear that Lemma \ref{lem: A} is stronger than Hypothesis \ref{hyp: criterion} since \eqref{appeq: bound exp sum} implies that 
 for 
\begin{equation*}
	\|\theta\| \geqslant \frac 1 {(\log \log N)^{\valpha}},
\end{equation*}
with $   0 < \valpha < 1/2$,  we have uniformly
\begin{equation*}
	\sum_{\mathscr{N} (\mathfrak{n})  \leqslant N} \mathrm{e} (\theta \, \Omega (\mathscr{N} (\mathfrak{n})) ) \Lt_{\mathrm{K}, \valpha, \beta} \frac {N^d} {(\log \log N)^{\beta}},
\end{equation*}
for any $\beta > 0$.  At any rate, by the arguments in \S \ref{sec: proof, thm}, it is easy to deduce Theorem \ref{thm: num field} from Lemmas \ref{lem: A} and \ref{lem: B}.

\subsection{Sketch of Proof of Lemmas \ref{lem: A} and \ref{lem: B}} 
 
Let $\mathfrak{p} \in G_{\mathrm{K}}$ always denote prime ideals and $v_{\mathfrak{p}}$ the $ \mathfrak{p}$-adic valuation.  We have
\begin{equation}\label{appeq: Omega}
	\Omega_{\mathrm{K}} (\mathfrak{n}) = \sum_{\mathfrak{p}} v_{\mathfrak{p}} (\mathfrak{n}), \qquad  \Omega  (\mathscr{N} (\mathfrak{n})) = \sum_{\mathfrak{p}} f_{\mathfrak{p}} v_{\mathfrak{p}}  (\mathfrak{n}), 
\end{equation}
where $f_{\mathfrak{p}}$ is the residue degree of $\mathfrak{p}$ so that $ \mathscr{N} (\mathfrak{p}) = p^{f_{\mathfrak{p}}}$ if $ \mathfrak{p} \,|\, p $. 

For Lemma  \ref{lem: A}, the key observation is that the discrepancy at $f_{\mathfrak{p}} \geqslant 2$ is represented by an absolutely
convergent Euler product on $\mathrm{Re}(s) > 1/2$.   More explicitly, for $|z| \leqslant 1$ and $\mathrm{Re}(s) > 1$, define \begin{equation*}
 \SF  (s, z) =  \sum_{\mathfrak n } 
\frac{z^{\Omega (\mathscr{N}(\mathfrak n))}}{ \mathscr{N} (\mathfrak n)^s}, \qquad 	 \SG (s, z) =  \sum_{\mathfrak n } 
	 \frac{z^{\Omega_{\mathrm{K}} (\mathfrak n)}}{ \mathscr{N} (\mathfrak n)^s} 
	     ;
\end{equation*} 
the latter is exactly the Dirichlet series of Wu \cite{Wu-Selberg-Delange} in the special case that $ f (\mathfrak n)   \equiv 1 $ and $ F (\mathfrak n) =  \Omega_{\mathrm{K}} (\mathfrak n) $. Note that the results of Wu do not apply in our setting due to the requirement $ F (\mathfrak{p}) = 1 $.  It follows from \eqref{appeq: Omega} that 
\begin{equation*}
	 \SF  (s, z) = \mathscr{H}  (s, z) \cdot \SG  (s, z) ,
\end{equation*}
with
\begin{equation*}
	\mathscr{H}  (s, z) =  \prod_{ {\mathfrak p : f_{\mathfrak p} \geqslant 2}} 
	\frac{1-z \mathscr{N} ( \mathfrak p)^{-s}}
	{1-z^{f_{\mathfrak p}} \mathscr{N}( \mathfrak p)^{-s}}.
\end{equation*}
Note that  for  $ \mathfrak p \,  | \,  p $ with $ f_{\mathfrak p} \geqslant 2$, we have 
\begin{align*}
	\log \bigg(\frac{1-z \mathscr{N} ( \mathfrak p)^{-s}}
	{1-z^{f_{\mathfrak p}} \mathscr{N}( \mathfrak p)^{-s}}\bigg) = O \big( \big|\mathscr{N}( \mathfrak p)^{-s}\big| \big) = O \big(  \big|p^{-2s} \big| \big), 
\end{align*}
so $\mathscr{H}  (s, z) $ is holomorphic 
on $\mathrm{Re} (s) > 1/2$,  uniformly bounded on $\mathrm{Re} (s) \geqslant \delta > 1/2$.  By slightly modifying the argument of Wu, we may still prove the analogue of \cite[Theorem 1]{Wu-Selberg-Delange}:  for $ |z| \leqslant 1$, $z \neq 1$,  and $N$ large,   
\begin{equation}\label{appeq: SD |z|=1}
	 \sum_{ \mathscr{N}(\mathfrak n)\leqslant N} 
	z^{\Omega (\mathscr{N}(\mathfrak n))} = N \int_0^{\sigma_{\mathrm{K}}} 
	\SF_{0} (1-t,z) t^{-z}N^{-t}\,\mathrm dt
	+
	O_{\mathrm{K}}\big(N \exp \big\{\! -C_{\mathrm{K}}  \mathscr{N} (N) \big\} \big),
\end{equation}
and 
\begin{equation}\label{appeq: SD z=1}
	\sum_{ \mathscr{N}(\mathfrak n)\leqslant N} 
	1 = \rho_{\mathrm{K}} N
	+
	O_{\mathrm{K}}\big(N \exp \big\{\! -C_{\mathrm{K}}^{\, \prime}  \mathscr{N} (N) \big\} \big),
\end{equation}
where $   \sigma_{\mathrm{K}}  $ is a small constant, $\mathscr{N} (N) = (\log N )^{3/5} (\log \log N)^{-1/5} $,  and 
\begin{equation}
	 \SF_{0} (s,z) = \frac {\sin (\pi z)} {\pi s} \SF_{1} (s,z)   \exp  \{z \SQ   (s)   \},  
\end{equation} 
for $s$ near $1$, with  
\begin{equation}\label{appeq: F1(s,z)}
	\SF_{1} (s,z) = \zeta_{\mathrm{K}} (s)^{-z} \SF  (s,z), \qquad  \exp \{\SQ (s) \} = (s-1) \zeta_{\mathrm{K}} (s) . 
\end{equation} 
From \ref{appeq: SD |z|=1}--\eqref{appeq: F1(s,z)}, we deduce the Selberg--Delange formula for $ \Omega (\mathscr{N}(\mathfrak n))  $ (see  \cite{Delange,Selberg-Sathe} or \cite[\S 6]{Wu-Selberg-Delange} for the case of $\Omega (n)$ or $\Omega_{\mathrm{K}}(\mathfrak{n})$).    
\begin{lemma}
	 We have 
	 \begin{align} \label{appeq: asymp}
	 	\sum_{ \mathscr{N}(\mathfrak n)\leqslant N}  
	 	z^{\Omega (\mathscr{N}(\mathfrak n))}  = W (z) N (\log N )^{z-1}  + O_{\mathrm{K}} \big( N (\log N)^{\mathrm{Re}(z) - 2}   \big), 
	 \end{align}
uniformly for $|z| \leqslant 1$,  where 
 \begin{equation}
 	W (z) = \frac {\rho_{\mathrm{K}}^z \SF_{1} (1,z)} {\Gamma (z)} =  \frac{\rho_K^z}{\Gamma(z)}
 	\prod_{\mathfrak p}
 	 \bigg(1- \frac {z^{f_{\mathfrak p}} } {\mathscr{N} ( \mathfrak p)}\bigg)^{-1} {\bigg(1- \frac 1 {\mathscr{N}(\mathfrak p)} \bigg)^z}   . 
 \end{equation}
\end{lemma}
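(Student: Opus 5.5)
The plan is to follow the classical Selberg--Delange contour argument, with \eqref{appeq: SD |z|=1} as the starting point. We evaluate the integral $N\int_0^{\sigma_{\mathrm{K}}} \SF_0(1-t,z)\, t^{-z} N^{-t}\,\mathrm{d}t$ asymptotically. The error term $O_{\mathrm{K}}(N\exp\{-C_{\mathrm{K}}\mathscr{N}(N)\})$ is already far smaller than $N(\log N)^{\mathrm{Re}(z)-2}$ for $|z|\leqslant 1$, so it can be absorbed.

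The key analytic input is that $\SF_1(s,z)=\zeta_{\mathrm{K}}(s)^{-z}\mathscr{H}(s,z)\SG(s,z)$, so $\SF_0(s,z)$ is holomorphic in a fixed disc around $s=1$ and uniformly bounded there for $|z|\leqslant 1$. Indeed, $\zeta_{\mathrm{K}}(s)^{-z}\SG(s,z)$ is the Euler product
\begin{equation*}
\prod_{\mathfrak p}\big(1-\mathscr{N}(\mathfrak p)^{-s}\big)^{z}\big(1-z\mathscr{N}(\mathfrak p)^{-s}\big)^{-1},
\end{equation*}
which converges absolutely on $\mathrm{Re}(s)>1/2$ for $|z|\leqslant 1$. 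There is one caveat: at $z$ near $\pm1$ and small norms, a factor $1-z\mathscr{N}(\mathfrak p)^{-s}$ could vanish only when $\mathscr{N}(\mathfrak p)^{\mathrm{Re}(s)}\leqslant 1$. This is impossible for $\mathrm{Re}(s)>0$, since $\mathscr{N}(\mathfrak p)\geqslant 2$. The factor $\mathscr{H}(s,z)$ is handled by the bound already noted in the paper. Also, $\exp\{z\SQ(s)\}$ is holomorphic near $1$ because $(s-1)\zeta_{\mathrm{K}}(s)$ is holomorphic and nonzero there, so we shrink $\sigma_{\mathrm{K}}$ if needed.

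Next we Taylor expand in $t$:
\begin{equation*}
\SF_0(1-t,z)=\SF_0(1,z)+O(t),
\end{equation*}
uniformly in $|z|\leqslant1$, using Cauchy's estimate on the disc. The main term becomes
\begin{equation*}
N\,\SF_0(1,z)\int_0^{\infty}t^{-z}N^{-t}\,\mathrm{d}t=N\,\SF_0(1,z)\,\Gamma(1-z)(\log N)^{z-1}.
\end{equation*}
Extending the integral from $\sigma_{\mathrm{K}}$ to $\infty$ costs $O(N^{1-\sigma_{\mathrm{K}}})$, which is acceptable. The remainder is $N\int_0^{\sigma_{\mathrm{K}}} O(t^{1-\mathrm{Re} z})N^{-t}\,\mathrm{d}t\ll N(\log N)^{\mathrm{Re}(z)-2}$. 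This holds uniformly since $\mathrm{Re}(z)\leqslant 1$ keeps $t^{1-\mathrm{Re} z}$ integrable, and $\Gamma(2-\mathrm{Re} z)$ is bounded on that range.

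It remains to identify the constant. By the reflection formula, $\frac{\sin(\pi z)}{\pi}\Gamma(1-z)=\frac{1}{\Gamma(z)}$. Moreover, $\exp\{z\SQ(1)\}=\rho_{\mathrm{K}}^z$ by \eqref{appeq: F1(s,z)}. This gives $W(z)=\rho_{\mathrm{K}}^z\SF_1(1,z)/\Gamma(z)$. The product formula follows by writing $\SF(s,z)=\prod_{\mathfrak p}(1-z^{f_{\mathfrak p}}\mathscr{N}(\mathfrak p)^{-s})^{-1}$ from \eqref{appeq: Omega} and multiplying by $\zeta_{\mathrm{K}}(s)^{-z}$ at $s=1$.

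The main obstacle is uniformity near $z=1$, where \eqref{appeq: SD |z|=1} excludes $z=1$ and $\Gamma(1-z)$ has a pole. We handle it by noting that the combination $\sin(\pi z)\Gamma(1-z)$ is entire. For $z=1$ itself the claim reads $\rho_{\mathrm{K}}N+O(N(\log N)^{-1})$, which is \eqref{appeq: SD z=1}, since $W(1)=\rho_{\mathrm{K}}$. Continuity of both sides in $z$, with the error constant uniform, then covers a neighbourhood of $1$.
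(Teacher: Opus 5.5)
Your proposal is correct and follows the paper's route: Taylor-expand $\SF_0(1-t,z)$ at $t=0$ in the Selberg--Delange integral representation, evaluate $\int_0^\infty t^{-z}N^{-t}\,\mathrm{d}t=\Gamma(1-z)(\log N)^{z-1}$, and identify $W(z)$ via the reflection formula and $\exp\{z\SQ(1)\}=\rho_{\mathrm{K}}^z$. You add the uniformity details the paper leaves implicit: the $O(t)$ remainder giving $\Gamma(2-\mathrm{Re}\,z)(\log N)^{\mathrm{Re}(z)-2}$, the tail beyond $\sigma_{\mathrm{K}}$, and $z=1$ via the ideal count; your closing continuity remark is harmless but unnecessary, since $\sin(\pi z)\Gamma(1-z)=\pi/\Gamma(z)$ already makes the argument uniform for all $z\neq 1$.
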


\begin{proof}
	 This lemma follows easily by  expanding $ \SF_{0} (1-t,z) $ into Taylor series at $ t = 0 $. Note that 
	 \begin{align*}
	 	\int_0^{\infty} t^{-z} N^{-t} \nd t = \Gamma (1-z) (\log N)^{z-1},  
	 \end{align*}
 while by the Euler reflection formula, 
 \begin{align*}
 	\frac {\sin (\pi z)} {\pi} \Gamma (1-z) = \frac 1 {\Gamma (z)}. 
 \end{align*}
\end{proof} 

As $W (z)$ is bounded on $|z| = 1$, we derive \eqref{appeq: bound exp sum} in Lemma \ref{lem: A} from \eqref{appeq: asymp} by setting $z = \mathrm{e}  (\theta) = \exp (2\pi i \theta)$. Now $$\big|(\log N)^{z - 1}\big| =  \exp \big\{
 (\cos(2\pi\theta) - 1)\log\log x
\big\} \leqslant \exp \big\{ \! - 8 \|\theta\|^2 \log\log x
\big\} . $$ 

Finally, let us prove Lemma \ref{lem: B}. It follow from \ref{appeq: CK(N)}, \ref{appeq: CK(N) - N}, and \ref{appeq: Omega} that 
\begin{align*}
	\sum_{\mathscr{N} (\mathfrak{n})  \leqslant N}   \Omega (\mathscr{N} (\mathfrak{n}))  =  \sum_{\mathscr N(\mathfrak p^{\mu} )  \leqslant N} f_{\mathfrak p}  C_{\mathrm K} \bigg(	\frac{N}{\mathscr N(\mathfrak p)^{\mu}}  \bigg) 
	\Lt_{\mathrm{K}}  N \sum_{ \mathscr N(\mathfrak p) \leqslant N }  \frac {1} { \mathscr N(\mathfrak p) -1 },
\end{align*}
where we have used the fact that $f_{\mathfrak{p}}$ is always bounded by the degree of $\mathrm{K}$. Now the Prime Ideal Theorem of Landau yields the bound $ O_{\mathrm{K}} (N \log \log N)$ as in \eqref{appeq: bound for Omega}.

\delete{For a rational prime $p$, we have
\begin{align*}
	p \SO_{\mathrm{K}} = \prod_{\mathfrak{p} \mid p}  \mathfrak{p}^{e_{\mathfrak{p}}}, \qquad \sum_{\mathfrak{p} \mid p} e_{\mathfrak{p}} f_{\mathfrak{p}} = \deg (\mathrm{K}).
\end{align*}
Thus the prime ideals $ \mathfrak{p}\, |\, p $ with $f_{\mathfrak{p}} = 1$ contribute at most  $\deg (\mathrm{K}) / p$, whereas those with $f_{\mathfrak{p}} \geqslant 2$ contribute at most $ \deg (\mathrm{K}) / p^2$. Consequently,
\begin{align*}
	\sum_{ \mathscr N(\mathfrak p) \leqslant N }  \frac {f_{\mathfrak{p}}} { \mathscr N(\mathfrak p) } \Lt_{\mathrm{K}} \sum_{ p \leqslant N } \frac 1 {p} + \sum_{p} \frac 1 {p^2} \Lt \log \log N,  
\end{align*}
as desired. }


\def\cprime{$'$}

\end{document}